\documentclass[12pt,twoside,reqno]{amsart}
\usepackage{amsmath}
\usepackage{amsfonts}
\usepackage{amssymb}
\usepackage{color}
\usepackage{mathrsfs}
\usepackage{cite}
\usepackage{geometry}
%
%
\textwidth 18cm 
\textheight 20cm 
\headheight 15pt 
\headsep 0.2in
\oddsidemargin -1cm 
\evensidemargin -1cm 
\topmargin 0cm
%
%

%

%

%
%
\newcommand{\modph}{\color{blue}}
%
%
%
\newtheorem{theorem}{Theorem}[section]
\newtheorem{corollary}[theorem]{Corollary}
\newtheorem{lemma}[theorem]{Lemma}
\newtheorem{proposition}[theorem]{Proposition}

\newtheorem{remark}[theorem]{Remark}
\numberwithin{equation}{section}
\begin{document}
\title{Global existence and uniform boundedness in\\ a chemotaxis model with signal-dependent motility}
\author{Jie Jiang}
\address{Innovation Academy for Precision Measurement Science and Technology, Chinese Academy of Sciences, Wuhan 430071, HuBei Province, P.R. China}
\email{jiang@apm.ac.cn, jiang@wipm.ac.cn}

\author{Philippe Lauren\c{c}ot}
\address{Institut de Math\'ematiques de Toulouse, UMR~5219, Universit\'e de Toulouse, CNRS \\ F--31062 Toulouse Cedex 9, France}
\email{laurenco@math.univ-toulouse.fr}

\keywords{global existence - boundedness - chemotaxis - comparison}
\subjclass{35B60 - 35K20 - 35K65 - 35M33 - 35Q92}

\date{\today}

\begin{abstract}
 Global existence is established for classical solutions to a chemotaxis model with signal-dependent motility for a general class of motility functions $\gamma$ which may in particular decay in an arbitrary way at infinity. Assuming further that $\gamma$ is non-increasing and decays sufficiently slowly at infinity, in the sense that $\gamma(s)\sim s^{-k}$ as $s\to\infty$ for some $k\in (0,N/(N-2)_+)$, it is also shown that global solutions are uniformly bounded with respect to time. The admissible decay of $\gamma$ at infinity here is higher than  in previous works.
\end{abstract}

\maketitle

%
%
\pagestyle{myheadings}
\markboth{\sc{J. Jiang \& Ph. Lauren\c{c}ot}}{\sc{Global existence and uniform boundedness in a chemotaxis model}}

\section{Introduction}\label{sec1}

We study the global existence of non-negative classical solutions to the parabolic-elliptic version ($\tau=0$) of the following Keller--Segel chemotaxis model:
\begin{equation}\label{ls}
	\begin{cases}
	\partial_t u =\Delta(u\gamma(v))\,, \qquad (t,x)\in (0,\infty)\times\Omega\,,\\
	\tau \partial_t v =\Delta v-v+u\,, \qquad (t,x)\in (0,\infty)\times\Omega\,,
	\end{cases}
\end{equation}
supplemented with homogeneous Neumann boundary conditions for $u$ and $v$. Here,  $\Omega$ is a bounded domain of $\mathbb{R}^N$ ($N\ge 1$) with smooth boundary, the relaxation time $\tau$ is a non-negative constant, and $u$ and $v$ denote the density of cells and the concentration of a  chemical signal, respectively. The motion of cells is biased by the local concentration of chemotactic signal and is prescribed by the motility $\gamma(v)$ of cells, which is a positive  function of $v$. 

The system~\eqref{ls} is a special version of the chemotaxis model derived by  Keller \& Segel  in their seminal work \cite{1971KS}  based on a random walk theory:
 \begin{equation}\label{ksv}
\begin{cases}
\partial_t u =\mathrm{div}(\mu(v)\nabla u-u\chi(v)\nabla v)\,,\\
\tau \partial_t v =\Delta v-v+u\,,
\end{cases}
\end{equation}
where the cell diffusion rate $\mu$ and the chemotactic sensitivity $\chi$ are assumed to depend only upon the signal concentration. In \eqref{ksv}, the cell diffusion rate $\mu$ and chemotactic sensitivity $\chi$ are linked via
\begin{equation}\label{ksv0}
\chi(v)=(\sigma-1)\mu'(v).
\end{equation}
Here the parameter $\sigma\geq 0$ is the ratio of  the distance between signal receptors over the walk length. If $\sigma>0$, cells can perceive the gradient of concentration by comparing them at different spots and hence chemotactic movement is induced by a gradient sensing mechanism. If $\sigma=0$, there is only one receptor in a cell. Thus an individual cell can only detect the concentration at one spot and the decision is made due to a local sensing mechanism without measuring a chemical gradient. One notices that $\sigma=0$ implies that $\chi=-\mu'$, so that system~\eqref{ksv} can be written in the concise form \eqref{ls} with $\gamma=\mu$. More recently, system~\eqref{ls} with a logistic source term in the first equation was introduced in \cite{2011Science, 2012PRL}  to study the process of pattern formations via the so-called "self-trapping" mechanism. There, $\gamma$ is assumed to be a decreasing function of $v$,  which characterizes a repressive effect of the signal concentration on the cellular motility. 

Recently, the mathematical research on system~\eqref{ls} has  attracted a lot of interest. From the analysis point of view, the main difficulty lies in a possible degeneracy  as $v$ becomes unbounded when a positive and monotone decreasing motility function is involved.  In view of the special Laplacian structure of the first equation in \eqref{ls}, duality and energy techniques were applied to obtain the global existence of solutions to \eqref{ls} in some recent works with certain additional restrictions on $\gamma$. In \cite{TW2017}, the motility $\gamma$ is assumed to range in a closed interval of $(0,\infty)$, thereby excluding the possibility of degeneracy and global classical solutions in two-space dimension, as well as global weak solutions in higher dimensions, were obtained. In \cite{DKTY2019} and \cite{BLT2020}, global weak solutions were constructed for $\gamma(s)=1/(c+s^{k})$, $c\geq0$, when $N\leq 3$, and for $\gamma(s)=e^{-s}$ when $N\geq1$, respectively.

A different approach, relying on a tricky two-steps comparison argument, was recently designed in \cite{FuJi2020,FuJi2020c} to derive directly upper bound estimates on $v$ for generic motility functions (specifically, $\gamma$ is only required to be positive and decay to zero at infinity). More precisely, it was proved that $v$  grows pointwisely at most exponentially in time  in any space dimension, so that degeneracy cannot take place in finite time. Thanks to this feature, global existence and boundedness of classical solutions in any space dimension are established via energy methods and Moser iterations in \cite{FuJi2020c, FuJi2020b, Jiang2020},  provided $\gamma$ behaves in a suitable way at infinity. More recently, through an adaptation of the comparison argument together with a standard compactness argument, the existence of  weak solutions to system~\eqref{ls} in any space dimension with a generic motility is also studied in \cite{LiJi2020}.  Still in arbitrary space dimension but with a completely different method, global existence of weak solutions to \eqref{cp} is also shown in \cite{BLT2020} for the particular choice $\gamma(s)=e^{-s}$, exploiting the gradient structure of \eqref{cp} available in that case.

In this paper, we consider the following initial Neumann boundary value problem to the quasi-stationary version of \eqref{ls}, corresponding to $\tau=0$:
\begin{subequations}\label{cp}
\begin{align}
& \partial_t u  = \Delta (u \gamma(v))\,, \qquad (t,x)\in (0,\infty)\times\Omega\,, \label{cp1} \\
&	- \Delta v + v  =  u\,, \qquad (t,x)\in (0,\infty)\times\Omega\,, \label{cp2} \\
& \nabla (u\gamma(v)) \cdot \mathbf{n} = \nabla v \cdot \mathbf{n} = 0\,, \qquad (t,x)\in (0,\infty)\times\partial\Omega\,, \label{cp3} \\
& u(0)  = u^{in}\,, \qquad x\in\Omega\,. \label{cp4}
\end{align}
\end{subequations}

To motivate the study performed herein, it is worth pointing out that stationary solutions to system~\eqref{cp} are also stationary solutions to the extensively studied parabolic-elliptic Keller--Segel chemotaxis model:
\begin{equation}\label{ks}
	\begin{cases}
	 \partial_t u  = \mathrm{div}(\nabla u-u\nabla \phi(v))\,, &\qquad (t,x)\in (0,\infty)\times\Omega\,, \\
		- \Delta v + v  =  u\,, &\qquad (t,x)\in (0,\infty)\times\Omega\,,
\end{cases}
\end{equation} 
provided that
\begin{equation*}
	\gamma(v)=e^{-\phi(v)}\,.
\end{equation*} 
It is thus of high interest to investigate whether there are some similarities or differences in the dynamical behavior of the solutions to both systems. When $\phi(s)=\chi s$ for some $\chi>0$, \eqref{ks} is usually referred to as the parabolic-elliptic minimal/classical Keller--Segel model and has received considerable attention in the last decades. It is well-known that finite time blowup may happen when $N\ge 2$, see \cite{Biler2020,NS1998,JL1992,HV1996,Nagai1995} and the references therein. In particular, when $N=2$, there is a threshold value $\Lambda_c>0$ of the total mass $\|u^{in}\|_{L^1(\Omega)}$ of the initial condition $u^{in}$ which only depends on $\Omega$ and separates the dynamical behavior of solutions to \eqref{ks}: the solution is global and uniformly-in-time bounded if  the total mass is less than $\Lambda_c$; otherwise, blowup may take place in finite time. When $\phi(s)=\chi s^{\beta}$ for some $\chi>0$ and $\beta>0$ and $\Omega$ is a ball, it is proved in \cite{NS1998} that radially symmetric solutions to \eqref{ks} are global and bounded if $0<\beta<1$ and $N=2$, but that finite time blowup may take place as soon as $\beta>0$  and $N\geq3$.   When $\phi(s)=\chi\log s$, system~\eqref{ks} is called the logarithmic Keller--Segel model and there is some evidence in the literature indicating that the parameter $\chi$ plays a key role in the dynamics of its solutions. It is conjectured in \cite{FuSe2018} that the threshold number separating boundedness and blowup of classical solutions to \eqref{ks} is $N/(N-2)$. However, up to now, the boundedness and blowup results are still far from complete. In the radially symmetric setting, existence of globally bounded solutions to \eqref{ks} is obtained in \cite{NS1998} for $\chi<\infty$ if $N=2$, and for $\chi<2/(N-2)$ if $N\geq3$. Moreover, finite time blowup was constructed for $\chi>2N/(N-2)$ if $N\geq3$. Without the radial symmetry requirement, existence of weak solutions is proved for $\chi\leq 1$ if $N=2$, and $\chi<2/N$ if $N\geq3$ \cite{Biler1999}. Later, in \cite{FWY2015}, globally bounded classical solutions are obtained for $\chi<2/N$ if $N\geq2$. We remark that, when $\chi>1$, it is known that the constant steady state is unstable \cite{1970KS}.

\medskip

Let us next review some related studies on problem~\eqref{cp} in the literature. In \cite{AhnYoon2019}, the existence of a global and bounded classical solution to \eqref{cp} is proved when $\gamma(s)=s^{-k}$ ($\phi(s)=k\log s$, correspondingly) for any $0<k<\infty$ if $N=2$, and for any $0<k<2/(N-2)$ if $N\geq3$, the proof exploiting delicate energy estimates. Later, based on a new comparison argument, it is  proved that classical solutions to \eqref{cp} exist globally for a generic motility function when $N=2$ \cite{FuJi2020}. In addition, these solutions are shown to be uniformly bounded with respect to time, as soon as $\gamma$ decreases at a subcritical rate, in the sense that $\gamma(s)$ decreases slower than $e^{-\alpha s}$ for any $\alpha>0$ as $s\rightarrow\infty$ \cite{FuJi2020b}. This implies in particular that \eqref{cp} has global and bounded classical solutions when $\gamma(s)=e^{-\chi s^{\beta}}$ ($\phi(s)=\chi s^{\beta}$, correspondingly) for any $\chi>0$ and $\beta\in(0,1)$ when $N=2$. Moreover, for the specific case $\gamma(s)=e^{-\chi s}$ ($\phi(s)=\chi s$, correspondingly) with $\chi>0$, a new critical mass phenomenon in two space dimensions is observed: there is a critical value of the total mass $\|u^{in}\|_{L^1(\Omega)}$ below which the global solution to \eqref{cp} is uniformly-in-time bounded while the trajectory $\{ u(t)\ :\ t\ge 0\}$ may be unbounded in $L^\infty(\Omega)$ when $\|u^{in}\|_{L^1(\Omega)}$ exceeds the threshold value \cite{FuJi2020}. The threshold value turns out to be the same as for the minimal Keller--Segel model ($\phi(s)=\chi s$ in \eqref{ks}), but the dynamical behavior is different in the two models, blowup being delayed to infinite time for \eqref{cp} \cite{BLT2020, FuJi2020, JW2020}). In a subsequent work \cite{FuJi2020b}, global existence of classical solutions to \eqref{cp} when $N\geq3$ is established for $\gamma(s)=s^{-k}$ provided that $0<k<(\sqrt{2N}+2)/(N-2)$ and boundedness is verified for $0<k<2/(N-2)$, again by a different method. Later, boundedness is extended to the case $k\in (0,1]$ when $N=4,5$, and $0<k<4/(N-2)$ when $N\geq6$ in \cite{Jiang2020}, based on an observation of an entropy structure and an application of a modified Moser iteration. 

\medskip

In this paper, we aim to improve existence and boundedness results of classical solutions to system~\eqref{cp} based on some new findings. To begin with, we introduce some basic assumptions and notations. Throughout this paper we use the short notation $\|\cdot\|_{p}$ for the norm $\|\cdot\|_{L^p(\Omega)}$ with  $p\in[1,\infty]$. For the initial condition $u^{in}$, we require that
\begin{equation}\label{ini}
u^{in}\in C(\bar{\Omega})\,, \quad u^{in}\geq0\,\, \text{in}\;\bar{\Omega}\,, \quad u^{in}\not\equiv0.
\end{equation}
In view of \eqref{cp2}, we may define the initial value of $v$ by $v^{in}\triangleq (I-\Delta)^{-1}u^{in}$ with $\Delta$ being the usual Laplace operator supplemented with homogeneous Neumann boundary conditions. According to the above definition, standard regularity theory of elliptic equations guarantees that $v^{in}\in W^{2,p}(\Omega)$ for all $p\in(1,\infty)$. For the motility function $\gamma$, we assume that 
\begin{equation}
\begin{cases}
& \gamma\in C^3((0,\infty))\,, \qquad \gamma>0 \;\;\text{in}\;\;(0,\infty)\,, \\
& \\
& K_s \triangleq \displaystyle{\sup_{\tau\in [s,\infty]}\{\gamma(\tau)\}}< \infty \;\;\text{ for all }\;\; s>0\,.
\end{cases} \tag{A0} \label{g0}
\end{equation}
We point out here that sign changes of $\gamma'$ are permitted by assumption \eqref{g0}, a feature that leads to different responses of the cells. Indeed, if $\gamma'<0$, then cells move toward higher chemical concentrations, while there is a chemorepulsive effect on the cells when $\gamma'>0$. 

We are now in a position to state our first main result concerning global existence of classical solutions to \eqref{cp}.

\begin{theorem}\label{TH1} Let $N\ge 2$. Suppose that $\gamma$ satisfies assumption \eqref{g0} and the initial condition $u^{in}$ satisfies \eqref{ini}. Then problem \eqref{cp} has a unique global non-negative classical solution $(u,v)\in (C([0,\infty)\times \bar{\Omega}) \cap C^{1,2}((0,\infty)\times \bar{\Omega}))^2$.
\end{theorem}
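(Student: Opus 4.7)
The plan is to follow the two-step strategy pioneered in \cite{FuJi2020,FuJi2020c}: first secure local well-posedness of \eqref{cp} together with a standard blow-up criterion, then derive a pointwise upper bound on $v$ that rules out degeneracy of $\gamma(v)$ on compact time intervals, and finally bootstrap to an $L^\infty$ bound on $u$ that prevents finite-time blow-up. Local existence is fairly standard: given a non-negative $u$, the elliptic equation \eqref{cp2} with Neumann boundary conditions determines $v=(I-\Delta)^{-1}u \in W^{2,p}(\Omega)$ for every finite $p$, so $\gamma(v)$ is a smooth positive coefficient in \eqref{cp1} as long as $v$ stays bounded and bounded away from zero. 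Amann's quasilinear parabolic theory then produces a unique non-negative classical solution $(u,v)$ on a maximal time interval $[0,T_{\max})$ together with the usual blow-up alternative. The parabolic maximum principle gives $u \geq 0$, integrating \eqref{cp1} over $\Omega$ using \eqref{cp3} yields mass conservation $\|u(t)\|_{1} = \|u^{in}\|_{1}$, and the elliptic strong maximum principle applied to \eqref{cp2} ensures that $v(t,\cdot)$ is strictly positive on $\bar{\Omega}$ for every $t \in [0,T_{\max})$.

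The core step is the pointwise upper bound on $v$ on every subinterval $[0,T] \subset [0,T_{\max})$. Substituting $u = v - \Delta v$ from \eqref{cp2} into \eqref{cp1} and inverting $I-\Delta$ transforms the evolution into
\begin{equation*}
\partial_t v - \gamma(v)\Delta v + \gamma(v) v = w, \qquad -\Delta w + w = u\gamma(v),
\end{equation*}
with homogeneous Neumann boundary conditions on $w$ and $w \geq 0$ by the elliptic maximum principle. The two-step comparison argument of \cite{FuJi2020,FuJi2020c} then controls $w$ pointwise in terms of $\|u^{in}\|_{1}$ by exploiting the boundedness of $\gamma$ away from the origin granted by \eqref{g0}, and compares $v$ with the solution of an auxiliary ordinary differential equation of the form $Y' = -\gamma(Y) Y + C$ via a parabolic maximum principle, yielding $v(t,x) \leq V(t)$ with $V$ finite (and in fact growing at most exponentially) on every finite interval. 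Once this bound is available, $\gamma(v)$ is bounded above and bounded below away from zero on $[0,T]\times\bar{\Omega}$, so \eqref{cp1} is uniformly parabolic there, and standard $L^p$--$L^q$ estimates combined with Moser iteration yield an $L^\infty$ bound on $u$; the blow-up alternative then forces $T_{\max} = \infty$. Uniqueness follows from the Lipschitz dependence of $\gamma(v)$ on $u$ once $\|v\|_{\infty}$ is controlled.

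The hardest part is obtaining a useful pointwise control of $w$, and hence of $v$, using only the conserved $L^1$ mass of $u$ together with the purely qualitative non-degeneracy encoded in \eqref{g0}. In dimensions $N \geq 3$ the $L^1$ bound on $u$ does not place $(I-\Delta)^{-1}u$ in $L^\infty$, so a naive elliptic estimate for $w$ fails. The key observation underlying the comparison approach is that the factor $\gamma(v)$ in the source $u\gamma(v)$ of the equation for $w$ is small precisely where $v$ is large, which compensates the lack of integrability and allows the iterative comparison with the auxiliary ODE to close for arbitrary $\gamma$ satisfying \eqref{g0}, with no prescribed rate of decay at infinity.
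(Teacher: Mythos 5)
Your first two steps (local existence with the blow-up alternative, the lower bound $v\ge v_*$, mass conservation, and the exponential-in-time upper bound on $v$) are essentially the paper's route, although the mechanism you describe for the bound on $w=(I-\Delta)^{-1}[u\gamma(v)]$ is not quite right: under \eqref{g0} no decay of $\gamma$ is available or used, and the bound is not ``in terms of $\|u^{in}\|_1$'' but the elliptic comparison $w\le(I-\Delta)^{-1}[K_*u]=K_*v$ with $K_*=K_{v_*}$, which gives $\partial_t v\le K_*v$ and hence $v\le v^{in}e^{K_*t}$ directly by Gr\"onwall (Lemma~\ref{tdub}); no auxiliary ODE comparison is needed. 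The genuine gap is the last step. From the two-sided bound $0<\gamma^*(T)\le\gamma(v)\le K_*$ you conclude that \eqref{cp1} is ``uniformly parabolic'' and that ``standard $L^p$--$L^q$ estimates combined with Moser iteration'' give $\|u\|_\infty$ on $[0,T]$. This does not follow. Written in divergence form, \eqref{cp1} reads $\partial_t u=\mathrm{div}\bigl(\gamma(v)\nabla u+u\gamma'(v)\nabla v\bigr)$, and the whole difficulty sits in the drift $u\gamma'(v)\nabla v$: at this stage $\nabla v$ is only controlled in low Lebesgue norms (from the mass one gets $\nabla v\in L^q$, $q<N/(N-1)$, and from $\|v\|_\infty$ at best $\nabla v\in L^\infty_tL^2_x$), which is insufficient for a drift Moser iteration when $N\ge2$. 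Equivalently, testing with $u^{p-1}$ and using $\Delta v=v-u$ produces terms of the form $\int u^p\gamma''(v)|\nabla v|^2$ and $-\int\gamma'(v)u^{p+1}$, superlinear quantities that cannot be absorbed by the dissipation in general; recall that the minimal Keller--Segel model has uniformly parabolic diffusion and nevertheless blows up, so uniform parabolicity plus $v\in L^\infty$ is not by itself a ``standard'' regularity criterion. In the earlier works \cite{FuJi2020c,FuJi2020b,Jiang2020} this step was closed only under additional pointwise inequalities on $\gamma,\gamma',\gamma''$, which is precisely what assumption \eqref{g0} (allowing sign changes of $\gamma'$) does not provide.

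The paper's proof replaces your last step by a different mechanism, which is its main novelty: instead of estimating $u$ directly, it regards $v$ as the solution of the quasilinear problem \eqref{cpv}, $\partial_t v-\mathrm{div}(\gamma(v)\nabla v)=-f(v,\nabla v)+\varphi$ with $f(v,\nabla v)=v\gamma(v)+\gamma'(v)|\nabla v|^2$ and $0\le\varphi\le K_*v$, derives a local energy (Caccioppoli-type) estimate in which the quadratic gradient term is tamed by the smallness-of-oscillation condition \eqref{smallc1}, and invokes \cite[Chapter~II, Theorem~8.2]{LSU1968} to get H\"older continuity of $v$ (Proposition~\ref{prop.impreg2}). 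This H\"older regularity is exactly what allows Amann's theory of non-autonomous parabolic evolution operators to be applied, yielding first $W^{2\theta,p}$ and then $W^{2,p}$ bounds on $v$ for $p>N$ (Lemma~\ref{lem.impreg3}, Proposition~\ref{prop.impreg4}); then $u=v-\Delta v$ is bounded in $L^p$ with $p>N$, $\nabla v\in L^\infty$, and only at that point does a bootstrap give $\|u\|_\infty\le C(T)$ and $T_{\mathrm{max}}=\infty$. Without this (or some substitute argument showing that $L^\infty$-boundedness of $v$ alone controls $u$), your proof does not close.
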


 Observe that \eqref{g0} does not require $\gamma$ to be bounded as $s\to 0$, so that our analysis includes in particular $\gamma(s)=s^{-k}$ for $k>0$. This feature is actually not surprising in view of \eqref{e00} below, which states that $v$ has a time-independent strictly positive lower bound $v_*$, which is also independent of the choice of $\gamma$. Furthermore, the outcome of Theorem~\ref{TH1} confirms that the dynamics of \eqref{cp} differs markedly from that of \eqref{ks}. Indeed, global existence is the rule for \eqref{cp} in any space dimension while, according to the discussion above, it only holds true for \eqref{ks} when $\phi$ satisfies some growth conditions depending on the space dimension $N$.

We next investigate the boundedness of classical solutions to \eqref{cp} in the chemoattractive regime. For this purpose, we propose the following assumptions on $\gamma$:
\begin{equation}
	\gamma\in C^3((0,\infty))\,, \qquad \gamma>0,\;\;\gamma'\leq 0 \;\;\text{in }\;\; (0,\infty)\,, \tag{A1} \label{g1}
\end{equation}
and, moreover,
\begin{equation}\label{gamma2}
\text{there are $k\geq l \geq0$ such that}\;\;\liminf\limits_{s\rightarrow\infty}s^{k}\gamma(s)>0\;\;\text{and}\;\;
\limsup\limits_{s\rightarrow\infty}s^{l}\gamma(s)<\infty. \tag{A2}
\end{equation}
We have the following remarks on the above assumptions on $\gamma$. First, under the assumption \eqref{g1}, $\gamma$ satisfies \eqref{g0} with  $K_s=\gamma(s)$ for $s>0$. Second, the decay assumption \eqref{gamma2} is only assumed as $s\rightarrow\infty$, which allows $\gamma$ to  behave in an arbitrary way within a finite region. Besides,  we point out that  $\limsup\limits_{s\rightarrow\infty}s^{l}\gamma(s)<\infty$ with some $l>0$ and $\gamma'\leq 0$ imply that $\lim\limits_{s\rightarrow\infty}\gamma(s)=0.$ 
Some typical examples of motilities $\gamma$ satisfying \eqref{g1}-\eqref{gamma2} are $(a_1+s)^{-k_1}$, $(a_1+s)^{-k_1}\log^{-k_2} (a_2+s)$, and $(a_1+s)^{-k_1}+(a_2+s)^{-k_2}$, with  $a_i\ge 0$, $k_i>0$ ($i=1,2$).

Our second result on the boundedness of global solutions to \eqref{cp} is stated as follows.

\begin{theorem}\label{TH2}
 Let $N\ge 3$. Suppose that $\gamma$ satisfies assumption \eqref{g1}, as well as  assumption \eqref{gamma2} with some $k\geq l \geq0$ satisfying 
\begin{equation*}
 	k < \frac{N}{N-2} \;\;\text{ and }\;\; k-l < \frac{2}{N-2}\,.
 \end{equation*}
  Then, for any initial condition $u^{in}$ satisfying  \eqref{ini}, problem \eqref{cp} has a unique global non-negative classical solution $(u,v)$ which is  uniformly-in-time bounded, i.e., there is a time-independent constant $C>0$ depending only on $u^{in}$, $\gamma$, $N$, and $\Omega$ such that
\begin{equation*}
\|u(t)\|_{\infty}+\|v(t)\|_{W^{1,\infty}}\leq C\,, \qquad t\geq 0\,.
\end{equation*}
\end{theorem}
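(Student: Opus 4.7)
The plan is to bootstrap from the global existence provided by Theorem~\ref{TH1} to uniform-in-time bounds by first establishing a uniform $L^p$ estimate on $u$ for some $p>N/2$, then using elliptic regularity applied to~\eqref{cp2} to control $v$ in $L^\infty$, and finally reaching $L^\infty$ control on $u$ via a Moser-type iteration that becomes tractable once $\gamma(v)$ is bounded below by a positive constant. The decreasing character of $\gamma$ ensured by \eqref{g1}, combined with the pointwise lower bound $v\ge v_*>0$ mentioned after Theorem~\ref{TH1}, already guarantees that $\gamma(v)$ is bounded above by a time-independent constant, so the genuine issue is to prevent $v$ from becoming unbounded as $t\to\infty$.

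First, I would invoke the two-step comparison argument developed in \cite{FuJi2020,FuJi2020c} and refined in \cite{FuJi2020b,Jiang2020} for the quasi-stationary case $\tau=0$. This delivers a pointwise bound on $v$ in terms of an auxiliary scalar quantity, reducing the question of uniform control on $v$ to the availability of suitable integrability estimates on $u$ compatible with the decay rate of $\gamma$ prescribed by \eqref{gamma2}.

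Next comes the core $L^p$-energy estimate. Multiplying~\eqref{cp1} by $u^{p-1}$ and integrating by parts yields, for $p\ge 2$,
\begin{equation*}
\frac{1}{p}\frac{d}{dt}\int_\Omega u^p\,dx + (p-1)\int_\Omega u^{p-2}\gamma(v)|\nabla u|^2\,dx = -(p-1)\int_\Omega u^{p-1}\gamma'(v)\,\nabla u\cdot\nabla v\,dx.
\end{equation*}
Using $\gamma'\le 0$ and Young's inequality, half of the cross term is absorbed into the dissipation, and there remains a contribution of the form $\int_\Omega u^p (\gamma'(v))^2\gamma(v)^{-1}|\nabla v|^2\,dx$. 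Under \eqref{g1}--\eqref{gamma2} this integrand is pointwise dominated on $\{v\ge 1\}$ by $C u^p v^{l-k-2}|\nabla v|^2$. Testing the elliptic equation~\eqref{cp2} against an appropriate power of $v$ yields an identity controlling weighted gradient integrals of $v$ in terms of moments of $u$, which, coupled with a Gagliardo--Nirenberg interpolation of $u^{p/2}$ between $\int u^{p-2}\gamma(v)|\nabla u|^2\,dx$ and $\|u\|_p^p$, allows the remainder to be swallowed by the dissipation precisely when both $k<N/(N-2)$ and $k-l<2/(N-2)$ hold: the former controls the effective Sobolev exponent produced by the interpolation, while the latter ensures that the weighted power of $v$ in the product $u^p v^{l-k-2}|\nabla v|^2$ stays subcritical.

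Finally, with a uniform bound on $\|u(t)\|_p$ at hand for some $p>N/2$, elliptic regularity applied to~\eqref{cp2} delivers a uniform bound on $v$ in $L^\infty((0,\infty)\times\Omega)$, and hence a strictly positive uniform lower bound on $\gamma(v)$. Equation~\eqref{cp1} is then uniformly parabolic, and a standard Moser iteration upgrades the $L^p$ bound to $\|u(t)\|_\infty\le C$ uniformly in time; elliptic regularity applied once more to~\eqref{cp2} yields the $W^{1,\infty}$ bound on $v$ stated in the theorem. The main obstacle lies in the middle step: redistributing the weights $v^{l-k-2}$ between the $u^p$ factor and $|\nabla v|^2$ in the Gagliardo--Nirenberg interpolation so as to cover the whole subcritical regime $k<N/(N-2)$ is the delicate balancing point of the proof, and it is precisely this balancing which forces the additional constraint $k-l<2/(N-2)$.
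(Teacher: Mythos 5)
There is a genuine gap at the heart of your middle step. After Young's inequality your $L^p$-estimate for $u$ leaves the term $\int_\Omega u^p\,(\gamma'(v))^2\gamma(v)^{-1}|\nabla v|^2\,\mathrm{d}x$, and you then claim that on $\{v\ge 1\}$ the weight $(\gamma'(v))^2\gamma(v)^{-1}$ is pointwise dominated by $Cv^{l-k-2}$. This does not follow from \eqref{g1}--\eqref{gamma2}: those assumptions constrain only $\gamma$ itself (monotonicity and a two-sided power-law envelope at infinity), and give no quantitative control whatsoever on $\gamma'$. A motility $\gamma$ can satisfy $c\,s^{-k}\le\gamma(s)\le C\,s^{-l}$ for large $s$ while $|\gamma'|$ is arbitrarily large on short intervals (steep drops between near-plateaus), so the weight cannot be bounded by any power of $v$ at the stage of the argument where $v$ is not yet known to be bounded. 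This is precisely the kind of pointwise hypothesis on $\gamma'$ (and $\gamma''$) that earlier works \cite{FuJi2020b,Jiang2020} imposed and that Theorem~\ref{TH2} is designed to remove (see the remark following the theorem); your proof would only recover those older, more restrictive results. The same issue reappears in your final Moser iteration for $u$, since writing \eqref{cp1} as $\partial_t u=\mathrm{div}(\gamma(v)\nabla u+u\gamma'(v)\nabla v)$ again requires control of $\gamma'(v)\nabla v$ --- harmless once $v$ is uniformly bounded (then $\gamma'$ is bounded on $[v_*,v^*]$ by $C^3$-regularity), but not before.

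The paper's route avoids $\gamma'$ entirely by estimating $v$ first, not $u$. Using \eqref{cp2} one writes $u\gamma(v)=-\Delta\Gamma(v)+\Gamma(v)+\gamma'(v)|\nabla v|^2+v\gamma(v)-\Gamma(v)$ with $\Gamma'=\gamma$; since $\gamma'\le0$ and $s\gamma(s)-a\gamma(a)\le\Gamma(s)$ (Lemma~\ref{lemGam}), the elliptic comparison principle gives the key sublinear bound $(I-\Delta)^{-1}[u\gamma(v)]\le\Gamma(v)+a\gamma(a)$, hence $\partial_t v+u\gamma(v)\le\Gamma(v)+a\gamma(a)$. Multiplying by $v^{p-1}$, using $1/\gamma(s)\le C(1+s^k)$ and $u=v-\Delta v$, and interpolating against the mass-based bound $\|v\|_q\le C$ for $q<N/(N-2)$ (Corollary~\ref{cr3}) yields time-independent $L^p$-bounds on $v$ for all $p$, exactly under $k<N/(N-2)$ and $k-l<2/(N-2)$; an Alikakos--Moser iteration then gives $\|v\|_\infty\le v^*$. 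Only afterwards are H\"older estimates and Amann's theory for the quasilinear equation satisfied by $v$ used to get uniform $W^{2,p}$-bounds on $v$, hence $L^p$-bounds on $u=v-\Delta v$ and, by bootstrap, the stated $L^\infty$ bounds. Unless you add a pointwise decay hypothesis on $\gamma'$ (thereby proving a weaker theorem), your proposal cannot be completed as written.
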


\begin{remark}We have the following remarks:
	\begin{itemize}
		\item When $l=0$ and $0<k<2/(N-2)$, Theorem~\ref{TH2} improves \cite[Theorem~1.2]{FuJi2020b} by removing an additional constraint involving $\gamma$, $\gamma'$ and $\gamma''$, which was required to hold pointwisely;
		\item If $\gamma$ satisfies \eqref{g1} and has a strictly positive lower bound on $(0,\infty)$,  the classical solution to \eqref{cp} is uniformly-in-time bounded according to Theorem~\ref{TH2}, which we may apply in that case with $k=l=0$ in assumption \eqref{gamma2}.
	\end{itemize}
\end{remark} 

As a consequence of the above results together with the outcome of \cite{AhnYoon2019} when $N=2$ and the exponential stabilization provided in \cite{Jiang2020}, we have the following result for motility functions which are negative power laws.

\begin{proposition}Assume $N\geq2$ and  $\gamma(s)=s^{-k}$ for some $k>0$. For any initial condition $u^{in}$ satisfying \eqref{ini}, problem \eqref{cp} has a unique global classical solution. Moreover, it is uniformly-in-time bounded when, either $k<\infty$ and $N=2$, or $k<N/(N-2)$ and $N\geq3$. Furthermore, if $k\leq1$, then the  solution to \eqref{cp} decays exponentially fast in $L^\infty(\Omega)\times W^{1,\infty}(\Omega)$ to the constant steady state $\left( \frac{\| u^{in}\|_1}{|\Omega|},\frac{\| u^{in}\|_1}{|\Omega|} \right)$ as time goes to infinity. 
\end{proposition}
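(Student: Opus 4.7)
The plan is to observe that this proposition is a direct consolidation of Theorems~\ref{TH1} and~\ref{TH2} above together with the boundedness result in two dimensions from \cite{AhnYoon2019} and the exponential stabilization from \cite{Jiang2020}. The whole task reduces to checking that the specific motility $\gamma(s)=s^{-k}$ with $k>0$ satisfies the hypotheses of each of these results.

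First I would establish global existence. Since $\gamma(s)=s^{-k}$ is $C^\infty$ on $(0,\infty)$, strictly positive, and (strictly) decreasing, one has
\[
K_s=\sup_{\tau\in[s,\infty]}\{\tau^{-k}\}=s^{-k}<\infty
\]
for every $s>0$, so assumption \eqref{g0} holds. Theorem~\ref{TH1} then produces, for every $N\ge 2$ and every $u^{in}$ satisfying \eqref{ini}, a unique global non-negative classical solution $(u,v)$.

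Next I would treat uniform-in-time boundedness. When $N=2$, this is precisely the content of the main result of \cite{AhnYoon2019}, valid for all $k\in(0,\infty)$, and nothing further is required. When $N\ge 3$, I would verify the hypotheses of Theorem~\ref{TH2}: assumption \eqref{g1} holds because $\gamma'(s)=-ks^{-k-1}<0$ on $(0,\infty)$, and assumption \eqref{gamma2} holds with the choice $l=k$, since $\lim_{s\to\infty}s^k\gamma(s)=1\in(0,\infty)$. The standing hypothesis $k<N/(N-2)$ is precisely the first constraint of Theorem~\ref{TH2}, while the second, $k-l<2/(N-2)$, reduces to $0<2/(N-2)$ and is automatic. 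Theorem~\ref{TH2} then yields the bound on $\|u(t)\|_\infty+\|v(t)\|_{W^{1,\infty}}$ uniformly in $t\ge 0$.

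Finally, for the stabilization statement under the additional restriction $k\le 1$, I would invoke the exponential convergence to the homogeneous steady state $(\bar u^{in},\bar u^{in})$ with $\bar u^{in}=\|u^{in}\|_1/|\Omega|$ established in \cite{Jiang2020}; its hypotheses on $\gamma$ are met by $s\mapsto s^{-k}$ with $0<k\le 1$, and the uniform bounds just obtained provide the compactness needed to feed into that result. Since the proposition is assembled entirely from already-proved statements, there is no genuine analytical obstacle here; the only point deserving attention is that the critical exponent $k=N/(N-2)$ is excluded from Theorem~\ref{TH2}, which matches exactly the strict inequality appearing in the proposition.
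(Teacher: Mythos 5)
Your proposal is correct and matches the paper's treatment: the proposition is indeed stated there without a separate proof, as a direct consequence of Theorems~\ref{TH1} and~\ref{TH2} combined with the two-dimensional boundedness from \cite{AhnYoon2019} and the exponential stabilization from \cite{Jiang2020}, exactly as you assemble it. Your verification that $\gamma(s)=s^{-k}$ satisfies \eqref{g0}, \eqref{g1}, and \eqref{gamma2} with $l=k$ (so that $k-l=0<2/(N-2)$ is automatic) is the whole point, and it is carried out correctly.
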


We stress that the assumptions on $\gamma$ in this paper are greatly weakened compared with those in previous works  \cite{FuJi2020b,Jiang2020,AhnYoon2019}. On the one hand, for the global existence, only \eqref{g0} is assumed and no monotonicity is required.  In particular, sign-changing of $\gamma'$ is permitted in our setting.  On the other hand, uniform boundedness with respect to time is obtained for motility functions belonging to a much larger set of decreasing functions and the decay rate condition \eqref{gamma2} is also looser than those in  \cite{FuJi2020b,Jiang2020,AhnYoon2019}. In particular, for the specific case $\gamma(s)=s^{-k}$, the admissible range for boundedness is improved to $k\in(0,N/(N-2))$ for all $N\geq 3$. Thus we achieve the uniform boundedness of classical solutions to \eqref{cp} within the whole range $(0,N/(N-2))$, which is also conjectured to ensure uniform boundedness of classical solutions to the logarithmic Keller-Segel model, corresponding to  \eqref{ks} with $\phi(v) = - \log{\gamma(v)} = k \log{v}$ \cite{FuSe2018}.

\medskip

Now let us briefly illustrate the main strategy of our proof. The first step is to establish an upper bound of $v$. Following the comparison argument developed in \cite{FuJi2020,LiJi2020}, one obtains by (formally) taking $(I-\Delta)^{-1}$ on both sides of \eqref{cp1}  the following key identity:
\begin{equation*}
\partial_t v+u\gamma(v)=(I-\Delta)^{-1}[u\gamma(v)].
\end{equation*}
Then, thanks to \eqref{g0} and \eqref{cp2}, one can estimate the nonlocal term by the comparison principle for elliptic equations to deduce that
\begin{equation}\label{comp}
(I-\Delta)^{-1}[u\gamma(v)]\leq (I-\Delta)^{-1}[K_\gamma u] =K_\gamma v
\end{equation} 
with some positive constant $K_\gamma>0$ which will be specified below. Then a time-dependent upper bound for $v$ can be derived by a direct application of Gronwall's inequality. In addition, a uniform-in-time upper bound can be obtained with additional growth assumptions on $\gamma$. Indeed, substituting \eqref{cp2} into the above key identity, we find that
\begin{equation}\label{kid}
\partial_t v -\gamma(v)\Delta v+v\gamma(v)=(I-\Delta)^{-1}[u\gamma(v)].
\end{equation}
In \cite{FuJi2020b,Jiang2020}, applying a delicate Alikakos--Moser type iteration to \eqref{kid}, it is proved that, in order to get a uniform-in-time upper bound of $v$, it suffices to prove a time-independent  estimate on  $\|v\|_{q}$ for some $q>kN/2$. To achieve the latter goal, both works pay efforts to derive the uniform-in-time $H^1$-boundedness of $v$ by means of a duality argument or entropy structure. However, since $H^1(\Omega)\hookrightarrow L^{q}(\Omega)$ for any  $1\leq q\leq 2N/(N-2)$, such a tactic works for $k<4/(N-2)$ at most, and hence apparently, $k$ must be less than $1$ when $N\geq6$ along this idea. We recall that $0<k\leq 1$ is the range for the constant steady state  $\left( \frac{\| u^{in}\|_1}{|\Omega|},\frac{\| u^{in}\|_1}{|\Omega|} \right)$ to be  linearly stable \cite{AhnYoon2019,Jiang2020}.

In the present work, we handle  the nonlocal term on the right hand side of \eqref{kid} in a more technical way. Owing to the comparison principle for elliptic equations and the non-increasing property of $\gamma$, we prove  that
\begin{equation}\label{comp1}
	(I-\Delta)^{-1}[u\gamma(v)]\leq \Gamma(v)+C\,.
\end{equation}
Here, $\Gamma'=\gamma$ and $C$ is a positive constant depending only on $\gamma$ and the initial condition. For the special case $\gamma(s)=s^{-k}$, $\Gamma$ is a power type  function of order $1-k$. Thus the nonlocal term is bounded by a sublinear nonlinearity,  and the growth of the right hand side of \eqref{kid} with \eqref{comp1} is thus of lower order than the one derived from \eqref{comp}. This key observation enables us to establish time-independent  estimates of $\|v\|_{q}$ for any sufficiently large $q$ under the condition $0<k<N/(N-2)$. Then, by a (simplified) Moser iteration, we can derive a uniform-in-time upper bound of $v$ for $k$ within this range.

Once the (time-independent) upper bound obtained for $v$, the existence and boundedness of classical solutions to \eqref{cp} are proved via energy estimates in the already available literature by establishing (time-independent) $L^\infty_t L^p_x$-estimates for $u$ for some $p>N/2$ \cite{Jiang2020,FuJi2020,FuJi2020b,FuJi2020c}. However, such an idea turns out to be rather restrictive since, in all these works,  various inequalities involving $\gamma$, $\gamma'$, and $\gamma''$ are assumed to hold pointwisely, thereby excluding most functions satisfying \eqref{g1} and \eqref{gamma2}. 

In the current work, by transforming  \eqref{kid} to a quasilinear parabolic equation for $v$ in divergence form with a right-hand side featuring a local nonlinearity depending on $|\nabla v|^2$  and a nonlocal nonlinear but regularizing operator, we try to establish an estimate for $\|v\|_{W^{2,p}}$ rather than $\|u\|_{p}$. First, with the help of the upper bound of $v$, we show the H\"older continuity of $v$ with respect to both $t$ and $x$ following a classical approach developed in \cite{LSU1968}. In fact, by an application of a comparison argument to the nonlocal term, a local energy bound for $v$ is derived, which allows us to conclude that $v$ belongs to some H\"older space.  Thanks to this property, we are in a position to employ the theory developed by Amann in \cite{Aman1988, Aman1989, Aman1990, Aman1993, Aman1995} to find a representation formula for $v$ which involves a parabolic evolution operator having similar properties as an analytic semigroup. Thanks to estimates for the parabolic evolution operator given in \cite{Aman1995}, we are finally able to derive the (time-independent) $W^{2,p}$-estimates of $v$ for any $N<p<\infty$, which implies the (time-independent) $L^p$-boundedness of $u$ due to \eqref{cp2}. Our new approach indicates that $L^\infty$-boundedness of $v$ is sufficient to conclude the existence of global classical solutions to \eqref{cp} and moreover, if this upper bound is independent of time, then the solution is uniformly-in-time bounded.

The remainder of the paper is organized as follows.  In Section~\ref{sec2}, we provide some preliminary results and recall some useful lemmas. In Section~\ref{gecs}, we study the global existence of classical solutions to \eqref{cp}. An upper bound on $v$ is first shown via a comparison argument. Then we prove $W^{2,p}$-estimates on $v$ by the theory developed by Amann \cite{Aman1995} for non-autonomous parabolic equations, which is based on analytic semigroups. In Section~\ref{ubcs}, we first establish a time-independent $L^p$-estimate for $v$ for some sufficiently large $p>1$. Then we derive uniform-in-time upper bounds on $v$ by a delicate Alikakos--Moser iteration. Finally, the uniform-in-time boundedness of $u$ is  proved in a similar way as done in Section~\ref{gecs} with minor modifications.

\section{Preliminaries}\label{sec2}

In this section, we recall some useful lemmas which will be used in the subsequent parts. First, we state the existence of local classical solutions which can be proved  in the  same manner as in \cite[Lemma~3.1]{AhnYoon2019} based on standard fixed point argument and  regularity theory for elliptic equations.

 \begin{theorem}\label{local}
	Let $\Omega$ be a smooth bounded domain of $\mathbb{R}^N$. Suppose that $\gamma$ satisfies \eqref{g0} and $u^{in}$ satisfies \eqref{ini}. Then there exists $T_{\mathrm{max}} \in (0, \infty]$ such that problem \eqref{cp} has a unique nonnegative classical solution $(u,v)\in (C([0,T_{\mathrm{max}})\times\bar{\Omega})\cap C^{1,2}((0,T_{\mathrm{max}})\times\bar{\Omega}))^2$. The solution $(u,v)$ satisfies the mass conservation:
	\begin{equation}
	\int_\Omega u(t,x)\ \mathrm{d} x=\int_\Omega v(t,x)\ \mathrm{d} x= \int_\Omega u^{in}(x)\ \mathrm{d} x
	\quad \text{for\ all}\ t \in (0,T_{\mathrm{max}}).\label{e0}
	\end{equation}		
	If $T_{\mathrm{max}}<\infty$, then
	\begin{equation*}
	\limsup\limits_{t\nearrow T_{\mathrm{max}}}\|u(t)\|_{\infty}=\infty.
	\end{equation*}
\end{theorem}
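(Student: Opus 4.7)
The plan is to carry out a Banach fixed-point scheme on a short time interval $[0,T]$, decoupling \eqref{cp1}--\eqref{cp2} by successively solving the elliptic equation for $v$ and the linear parabolic equation for $u$. Fix $M > \|u^{in}\|_\infty$ and consider the closed set
\[
\mathcal{B}_T := \{w \in C([0,T]\times \bar\Omega) : w \geq 0,\ \|w\|_{\infty} \leq M,\ w(0,\cdot)=u^{in}\}.
\]
For $\tilde u \in \mathcal{B}_T$, solve $-\Delta v + v = \tilde u(t,\cdot)$ with homogeneous Neumann boundary condition for each $t$. Elliptic $L^p$-regularity gives $v(t,\cdot) \in W^{2,p}(\Omega)$ for every finite $p$, hence $v \in C([0,T]; C^{1,\alpha}(\bar\Omega))$ for some $\alpha \in (0,1)$ by Sobolev embedding. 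Since $u^{in} \geq 0$ and $u^{in} \not\equiv 0$, the strong maximum principle applied to $I-\Delta$ yields $v(0,\cdot) \geq v_* > 0$ on $\bar\Omega$; by continuity in $t$ and after possibly shrinking $T$, one may assume $v \geq v_*/2$ on $[0,T]\times \bar\Omega$, which keeps the whole argument inside the smoothness range of $\gamma$.

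With $v$ now known, expanding \eqref{cp1} gives
\[
\partial_t u = \gamma(v)\Delta u + 2\gamma'(v)\nabla v\cdot\nabla u + \bigl[\gamma''(v)|\nabla v|^2 + \gamma'(v)\Delta v\bigr]u,
\]
a uniformly parabolic linear equation with H\"older continuous coefficients, thanks to $\gamma\in C^3((0,\infty))$, the lower bound $v \geq v_*/2$, and the elliptic regularity above. Classical parabolic Schauder theory produces a unique solution $u \in C([0,T]\times\bar\Omega)\cap C^{1,2}((0,T]\times\bar\Omega)$ with initial datum $u^{in}$; the parabolic maximum principle gives $u \geq 0$, and the standard $L^\infty$ estimate controls $\|u(t)\|_\infty$ in terms of $\|u^{in}\|_\infty$ and a factor that tends to $1$ as $T \to 0$, so that the induced map $\Phi : \tilde u \mapsto u$ sends $\mathcal{B}_T$ into itself. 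Writing the equation satisfied by the difference $\Phi(\tilde u_1) - \Phi(\tilde u_2)$ and using the Lipschitz dependence of $v$ on $\tilde u$ (via the linear elliptic solve) together with the smoothness of $\gamma$ on $[v_*/2,\infty)$, one obtains a contraction estimate after further shrinking $T$, so that Banach's theorem yields a unique local fixed point, which is the desired classical solution.

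The mass identity \eqref{e0} follows by integrating \eqref{cp1} over $\Omega$ (the boundary term vanishes by \eqref{cp3}) and integrating \eqref{cp2} to identify $\int v$ with $\int u$. A standard continuation argument then yields a maximal existence time $T_{\max}\in (0,\infty]$ together with the blow-up criterion: were $T_{\max}<\infty$ and $\|u(t)\|_\infty$ bounded as $t\nearrow T_{\max}$, elliptic regularity would provide uniform $C^{1,\alpha}$ control of $v$ (with $v$ still bounded below since the lower bound in \eqref{e00} depends only on $\|u\|_\infty$), the coefficients in the expanded parabolic equation would remain uniformly H\"older, and the local existence result could be reapplied starting near $T_{\max}$, contradicting maximality. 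The main obstacle I expect is precisely the potential singularity of $\gamma$ at $0$; this is handled throughout by the strict positive lower bound on $v$ coming from the strong maximum principle applied to the nontrivial datum $u^{in}$, which keeps $\gamma$, $\gamma'$, and $\gamma''$ uniformly bounded on the relevant range and lets the Schauder scheme proceed as in the classical case.
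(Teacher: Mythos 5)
Your overall route (freeze $\tilde u$, solve the elliptic problem for $v$, solve the resulting linear parabolic problem for $u$, contract, then continue to a maximal time) is exactly the one the paper has in mind, since it refers to the fixed-point argument of Ahn--Yoon; the difficulties are in the execution, and the first one sits precisely at the point you yourself flag as the main obstacle. The positive lower bound on $v$ must hold \emph{uniformly over the whole fixed-point set}, not merely for the eventual solution, and your set $\mathcal{B}_T$ only pins down $w(0,\cdot)=u^{in}$: for every $T>0$ it contains, e.g., $w_n(t,x)=(1-nt)_+\,u^{in}(x)$ with $n$ large, for which $v(t)=(I-\Delta)^{-1}w_n(t)$ vanishes identically for $t\ge 1/n$. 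Since \eqref{g0} allows $\gamma$ to blow up at $0$ (the case $\gamma(s)=s^{-k}$ is explicitly included), $\gamma(v)$, $\gamma'(v)$, $\gamma''(v)$ are then undefined, so the map $\Phi$ is not defined on $\mathcal{B}_T$; ``continuity in $t$ and shrinking $T$'' cannot repair this, because the modulus of continuity of $\tilde u$ at $t=0$ is not uniform over $\mathcal{B}_T$. Nor can you invoke a bound like \eqref{e00} here: that bound uses conservation of $\|u(t)\|_1=\|u^{in}\|_1$, a property of the solution, not of arbitrary elements of the set. Standard repairs are either to build the constraint $\|w(t)-u^{in}\|_\infty\le v_*/2$ into $\mathcal{B}_T$ (then $v\ge v^{in}-v_*/2\ge v_*/2$ pointwise because $(I-\Delta)^{-1}$ is an $L^\infty$-contraction, but you must then verify the self-map property for this extra constraint), or to truncate $\gamma$ near $s=0$, solve the truncated problem, and remove the truncation a posteriori via mass conservation and \eqref{e00}.

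The second gap is the parabolic step itself. For a generic $\tilde u\in\mathcal{B}_T$, which is merely continuous, $v(t)=(I-\Delta)^{-1}\tilde u(t)$ is only continuous in $t$ with values in $W^{2,p}(\Omega)$, so the coefficients of your expanded non-divergence equation are not H\"older in $t$, and the zeroth-order coefficient $\gamma'(v)\Delta v=\gamma'(v)(v-\tilde u)$ is only continuous, not H\"older, in $x$; classical Schauder theory therefore does not apply as stated, and one has to use instead $L^p$ (VMO-coefficient) parabolic theory, or a divergence-form weak formulation with De Giorgi--Nash--Moser, recovering classical regularity only for the fixed point by a bootstrap that exploits the coupling. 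Relatedly, the contraction estimate in the sup norm is not routine: the equation for $\Phi(\tilde u_1)-\Phi(\tilde u_2)$ contains $\nabla\Phi(\tilde u_2)$ and $\Delta\Phi(\tilde u_2)$ multiplied by coefficient differences, and for merely continuous initial data these quantities degenerate like $t^{-1/2}$ and $t^{-1}$ as $t\to 0$; the $t^{-1}$ rate is not integrable, so ``further shrinking $T$'' alone does not yield a contraction, and one must work in weighted-in-time spaces, pass to a Duhamel/divergence-form formulation, or use a compactness (Schauder) fixed point with uniqueness proved separately. The mass identity and the continuation/blow-up criterion are fine once the local well-posedness is actually secured.
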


We also recall that, by \cite[Lemma~2.1]{FWY2015}, {\eqref{cp2}, \eqref{e0}, and the positivity of $\|u^{in}\|_1$,} there is $v_*>0$ depending only on $\Omega$ and $\|u^{in}\|_1$ such that
\begin{equation}
v(t,x) \ge v_*\,, \qquad (t,x)\in (0,T_{\mathrm{max}})\times \Omega\,. \label{e00}
\end{equation}

Next, we let $(\cdot)_+=\max\{\cdot,0\}$ and recall the following result,  see \cite[Proposition~(9.2)]{Aman1983}, \cite[Lemme~3.17]{BeBo1999}, or \cite[Lemma~2.2]{AhnYoon2019}.

\begin{lemma}\label{lm2}
	Let $\Omega$ be  a smooth bounded domain in $\mathbb{R}^N$, $N\geq1$, and $f\in  L^1(\Omega)$. For any $1\leq q< \frac{N}{(N-2)_+}$, there exists a positive constant $C=C(N,q,\Omega)$ such that the solution $z \in W^{1,1}(\Omega)$ to
	\begin{equation*}
	\begin{cases}
	-\Delta z+z=f,\qquad x\in\Omega\,,\\
	\nabla z\cdot \mathbf{n}=0\,,\qquad x\in\partial\Omega\,,
	\end{cases}
	\end{equation*}
	 satisfies
	\begin{equation*}
	\|z\|_q \leq C(N,q,\Omega) \|f\|_1\,.
	\end{equation*}
\end{lemma}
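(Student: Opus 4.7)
The plan is to prove the estimate by a duality argument, reducing it to a standard $W^{2,p}$-regularity plus Sobolev embedding statement for the dual Neumann problem. First, I would use that, by the Riesz representation theorem,
\begin{equation*}
\|z\|_q = \sup\left\{ \int_\Omega z g\,\mathrm{d}x \ :\ g\in C^\infty(\bar\Omega),\ \|g\|_{q'}\le 1 \right\}, \qquad q' = \frac{q}{q-1}\in \left( \tfrac{N}{2},\infty\right],
\end{equation*}
the range for $q'$ being exactly the one dual to the hypothesis $1\le q<N/(N-2)_+$.

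For such a test function $g$, I would introduce $w\in W^{2,q'}(\Omega)$ as the unique (smooth) solution to the auxiliary Neumann problem $-\Delta w + w = g$ in $\Omega$ with $\nabla w\cdot\mathbf{n}=0$ on $\partial\Omega$. Classical $L^{q'}$-regularity for the Neumann Laplacian provides
\begin{equation*}
\|w\|_{W^{2,q'}(\Omega)}\le C(N,q',\Omega)\,\|g\|_{q'}.
\end{equation*}
Since $q'>N/2$, Morrey's embedding gives $W^{2,q'}(\Omega)\hookrightarrow C(\bar\Omega)$, hence $\|w\|_\infty \le C(N,q,\Omega)\,\|g\|_{q'}$. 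Using $w$ as a test function and integrating by parts in the equation satisfied by $z$ (both boundary terms vanishing thanks to the homogeneous Neumann conditions on $z$ and $w$), one formally obtains
\begin{equation*}
\int_\Omega z g\,\mathrm{d}x = \int_\Omega z(-\Delta w+w)\,\mathrm{d}x = \int_\Omega (-\Delta z+z)\,w\,\mathrm{d}x = \int_\Omega f w\,\mathrm{d}x \le \|f\|_1 \|w\|_\infty,
\end{equation*}
and taking the supremum over admissible $g$ yields the claimed bound.

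The main obstacle is making the integration by parts rigorous, since $z$ is only assumed to lie in $W^{1,1}(\Omega)$ and $\Delta z$ is therefore a priori a distribution. I would handle this by a standard approximation procedure: take $(f_n)\subset C^\infty(\bar\Omega)$ with $f_n\to f$ in $L^1(\Omega)$, let $z_n\in W^{2,q'}(\Omega)$ be the classical solutions of the Neumann problem with datum $f_n$, apply the duality identity above (which is fully justified for $z_n$) to obtain $\|z_n\|_q\le C(N,q,\Omega)\|f_n\|_1$, and then pass to the limit $n\to\infty$ using the uniqueness of $W^{1,1}$-solutions to the Neumann problem with $L^1$ data (an ingredient already contained in the references cited in the statement). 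This furnishes the estimate for the limit solution $z$ and completes the proof.
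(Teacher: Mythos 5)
The paper itself does not prove this lemma: it is quoted from the literature (Amann, B\'enilan--Boccardo, Ahn--Yoon), so there is no in-paper argument to compare with. Your duality proof is essentially the classical argument behind those references and is sound, but two points deserve tightening. First, for $q=1$ you have $q'=\infty$, and the Calder\'on--Zygmund estimate $\|w\|_{W^{2,q'}}\le C\|g\|_{q'}$ is not available in $W^{2,\infty}$; either treat $q=1$ by the maximum principle (comparison with the constant $\|g\|_\infty$ gives $\|w\|_\infty\le\|g\|_\infty$ directly) or simply deduce the case $q=1$ from $1<q<N/(N-2)_+$ by H\"older's inequality on the bounded domain $\Omega$. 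Second, the approximation step can be bypassed, which removes the appeal to a uniqueness theorem whose proof is precisely the content of the cited references: for smooth $g$ the dual solution $w$ belongs to $C^{2,\alpha}(\bar\Omega)$ by Schauder theory (its sup norm being still controlled by $\|g\|_{q'}$ through $\|w\|_{W^{2,q'}}\le C\|g\|_{q'}$ and $W^{2,q'}(\Omega)\hookrightarrow C(\bar\Omega)$ for finite $q'>N/2$), so you may test the weak formulation satisfied by $z\in W^{1,1}(\Omega)$, namely $\int_\Omega(\nabla z\cdot\nabla\phi+z\phi)\,\mathrm{d}x=\int_\Omega f\phi\,\mathrm{d}x$ for all $\phi\in C^1(\bar\Omega)$, with $\phi=w$ and integrate by parts once, the boundary term vanishing because $\nabla w\cdot\mathbf{n}=0$ pointwise and $z$ has an $L^1(\partial\Omega)$ trace; this yields $\int_\Omega zg\,\mathrm{d}x=\int_\Omega fw\,\mathrm{d}x\le\|f\|_1\|w\|_\infty$ directly. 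If you prefer to keep the approximation route, be aware that identifying $\lim_n z_n$ with the given $z$ requires uniqueness in a class containing both the $W^{1,1}$ solution and the $L^q$ limit (e.g.\ very weak $L^1$ solutions), and that uniqueness is itself proved by exactly the duality computation above, so nothing is gained by the detour.
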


An immediate consequence of \eqref{e0} and Lemma~\ref{lm2} is the following time-independent bound on  the second component $v$ of the solution to \eqref{cp}.

\begin{corollary}\label{cr3}
For $q\in \left( 1,\frac{N}{(N-2)_+} \right)$, there is $C(q)>0$ such that
\begin{equation*}
\|v(t)\|_q \le C(q)\,, \qquad  t \in [0,T_{\mathrm{max}})\,.
\end{equation*}
\end{corollary}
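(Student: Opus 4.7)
The plan is very direct: this corollary is essentially just the concatenation of the elliptic regularity bound in Lemma~\ref{lm2} with the mass conservation identity \eqref{e0}. I would not attempt anything more sophisticated.

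First, I would fix $t \in [0,T_{\mathrm{max}})$ and observe that, thanks to Theorem~\ref{local}, the function $v(t,\cdot)$ lies in $C^2(\bar\Omega) \subset W^{1,1}(\Omega)$ and solves the Neumann problem
\begin{equation*}
-\Delta v(t) + v(t) = u(t) \text{ in } \Omega, \qquad \nabla v(t)\cdot \mathbf{n}=0 \text{ on } \partial\Omega,
\end{equation*}
with right-hand side $f = u(t) \in L^1(\Omega)$. For any $q \in \bigl(1,N/(N-2)_+\bigr)$, Lemma~\ref{lm2} applied pointwise in $t$ then yields a constant $C(N,q,\Omega)$ independent of $t$ such that
\begin{equation*}
\|v(t)\|_q \le C(N,q,\Omega)\,\|u(t)\|_1.
\end{equation*}

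Second, I would invoke the mass conservation identity \eqref{e0} from Theorem~\ref{local}, which gives $\|u(t)\|_1 = \|u^{in}\|_1$ for every $t \in [0,T_{\mathrm{max}})$. Defining $C(q) := C(N,q,\Omega)\|u^{in}\|_1$ immediately produces the claimed bound, uniformly in $t$. There is no substantive obstacle; the only point worth noting is that the constant $C(q)$ depends on $\Omega$, $N$, and the conserved $L^1$-norm of $u^{in}$, but is otherwise independent of the motility function $\gamma$ and of time, which is precisely what will be needed in the sequel.
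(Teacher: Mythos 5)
Your proof is correct and follows exactly the route the paper intends: the paper states Corollary~\ref{cr3} as an immediate consequence of mass conservation \eqref{e0} and the elliptic $L^1$--$L^q$ bound of Lemma~\ref{lm2}, which is precisely your two-step argument.
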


We next fix $a\in (0,v_*)$ and define 
\begin{equation*}
\Gamma(s)=\int_{a}^s\gamma(\eta)\mathrm{d}\eta, \qquad s>0.
\end{equation*}
The following elementary lower bound for $\Gamma$ is derived in \cite[Lemma~4.2]{FuJi2020c}.

\begin{lemma}\label{lemGam}
Assume that $\gamma$ satisfies \eqref{g1}. Then
	\begin{equation*}
	s\gamma(s)-a\gamma(a)\leq \Gamma(s),\qquad s\geq a.
	\end{equation*}
\end{lemma}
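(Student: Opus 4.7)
The plan is to exploit the monotonicity of $\gamma$ provided by \eqref{g1}, and the cleanest route is integration by parts. I would first write
\begin{equation*}
\Gamma(s) = \int_a^s \gamma(\eta)\,\mathrm{d}\eta = \bigl[\eta\gamma(\eta)\bigr]_a^s - \int_a^s \eta\gamma'(\eta)\,\mathrm{d}\eta = s\gamma(s) - a\gamma(a) - \int_a^s \eta\gamma'(\eta)\,\mathrm{d}\eta,
\end{equation*}
which uses only $\gamma \in C^3((0,\infty))$ and the fundamental theorem of calculus on the compact interval $[a,s]\subset(0,\infty)$ (recall that $a \in (0,v_*)$ is fixed, so we stay away from the possible singularity of $\gamma$ at $0$). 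Then, since $\eta > 0$ throughout $[a,s]$ and $\gamma'\leq 0$ on $(0,\infty)$ by \eqref{g1}, the integrand $\eta\gamma'(\eta)$ is non-positive, hence $-\int_a^s \eta\gamma'(\eta)\,\mathrm{d}\eta \geq 0$. Dropping this non-negative term yields the claim $s\gamma(s)-a\gamma(a) \leq \Gamma(s)$ directly.

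An alternative derivation that avoids differentiating $\gamma$ uses the monotonicity of $\gamma$ itself: for $\eta\in[a,s]$ one has $\gamma(\eta)\geq\gamma(s)$, so
\begin{equation*}
\Gamma(s) \geq (s-a)\gamma(s) = s\gamma(s) - a\gamma(s) \geq s\gamma(s) - a\gamma(a),
\end{equation*}
where the last inequality uses $\gamma(s)\leq\gamma(a)$ (again from $\gamma'\leq 0$ and $s\geq a$). This version has the mild advantage that it only requires $\gamma$ to be non-increasing rather than differentiable, but since \eqref{g1} already assumes $C^3$ regularity, the integration-by-parts proof is equally valid and arguably more transparent.

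There is no real obstacle here: the statement is a direct consequence of the two structural assumptions in \eqref{g1} (positivity and monotonicity of $\gamma$), and the only point requiring a touch of care is ensuring the boundary term at $\eta=a$ is finite, which is guaranteed by the choice $a\in(0,v_*)$ together with $\gamma\in C^3((0,\infty))$. I would present the integration-by-parts proof in a single displayed line followed by the one-sentence observation that the remaining integral is non-positive.
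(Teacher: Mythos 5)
Your proposal is correct. Your integration-by-parts argument is a valid (slightly different) route: it trades the pure monotonicity estimate for the identity $\Gamma(s)=s\gamma(s)-a\gamma(a)-\int_a^s\eta\gamma'(\eta)\,\mathrm{d}\eta$ and then discards the non-positive integral, which is harmless since \eqref{g1} grants $C^3$ regularity and $a\in(0,v_*)$ keeps the interval away from $0$. Your ``alternative'' derivation, on the other hand, is essentially verbatim the paper's own proof: bound $\gamma(\eta)\ge\gamma(s)$ on $[a,s]$ to get $\Gamma(s)\ge s\gamma(s)-a\gamma(s)$, then use $\gamma(s)\le\gamma(a)$ (and $a>0$) to replace $a\gamma(s)$ by $a\gamma(a)$. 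The only substantive difference between the two routes is the one you already note: the monotonicity argument needs nothing beyond $\gamma$ non-increasing, whereas integration by parts invokes $\gamma'$, so the paper's choice is marginally more economical in hypotheses, but under \eqref{g1} both are equally rigorous and equally short.
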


\begin{proof}
	Since $\gamma$ is non-increasing, we infer that, for $s\geq a$,
	\begin{equation*}
	\Gamma(s)=\int_a^s\gamma(\eta)\mathrm{d}\eta\geq\gamma(s)(s-a)=s\gamma(s)-a\gamma(s).
	\end{equation*}
	Therefore, when $s\geq a$, using again the monotonicity of $\gamma$ which ensures that $\gamma(s)\leq \gamma(a)$,
	\begin{equation*}
	s\gamma(s)\leq \Gamma(s)+a\gamma(s)\leq \Gamma(s)+a\gamma(a),
	\end{equation*}
	as claimed.
\end{proof}

We next turn to an upper bound for $\Gamma$.

\begin{lemma}\label{upG}
Under the assumptions \eqref{g1} and \eqref{gamma2}, there is $C>0$ depending on $a$ and $\gamma$ such that, for all $s\geq a$,
\begin{equation*}
\Gamma(s)\leq \Gamma_*(s)\triangleq\begin{cases}
C\log(s/a),\quad&\text{when}\;l=1\\
 \\
\displaystyle{\frac{C(s^{1-l}-a^{1-l})}{1-l}},\quad&\text{when}\;l\neq 1.
\end{cases}
\end{equation*}
\end{lemma}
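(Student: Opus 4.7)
The plan is to upgrade the asymptotic upper bound $\limsup_{s\to\infty} s^l \gamma(s) < \infty$ from assumption \eqref{gamma2} to a pointwise bound $\gamma(s) \leq C s^{-l}$ valid on all of $[a,\infty)$, and then simply integrate this bound from $a$ to $s$, computing the elementary primitive of $\eta^{-l}$ in the two cases $l=1$ and $l \neq 1$.

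More precisely, I would first fix $S > a$ and $M > 0$, given by \eqref{gamma2}, such that $s^l \gamma(s) \leq M$ for all $s \geq S$. On the compact interval $[a,S]$, the function $s \mapsto s^l \gamma(s)$ is continuous by \eqref{g1} and hence bounded; combining this observation with the choice of $S$ yields a constant $C = C(a,\gamma) > 0$ with
\begin{equation*}
\gamma(s) \leq C s^{-l}, \qquad s \geq a.
\end{equation*}
This pointwise majorant is the one ingredient that makes the whole statement essentially automatic.

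Integrating this inequality from $a$ to $s$ gives
\begin{equation*}
\Gamma(s) = \int_a^s \gamma(\eta)\,\mathrm{d}\eta \leq C \int_a^s \eta^{-l}\,\mathrm{d}\eta,
\end{equation*}
and evaluating the right-hand side yields $C\log(s/a)$ when $l=1$ and $C(s^{1-l} - a^{1-l})/(1-l)$ when $l\neq 1$, which is exactly $\Gamma_*(s)$ up to (and possibly after enlarging) the constant $C$.

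There is no genuine obstacle here: the only small subtlety is that \eqref{gamma2} only provides the power-law control for large $s$, so one must invoke continuity of $\gamma$ on the compact interval $[a,S]$ to extend the bound down to $a$. Note that the monotonicity assumption $\gamma' \leq 0$ from \eqref{g1} is not actually needed for this lemma — only \eqref{gamma2} and the positivity/continuity parts of \eqref{g1} enter the argument.
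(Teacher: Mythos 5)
Your proof is correct and follows essentially the same route as the paper: establish a pointwise bound $\gamma(s)\leq Cs^{-l}$ on $[a,\infty)$ and then integrate. The single difference is in how the bound is extended from $[s_0,\infty)$ down to $[a,s_0]$: the paper uses the monotonicity $\gamma'\leq0$ together with $l\geq0$ to write $s^l\gamma(s)\leq s_0^l\gamma(a)$ on $[a,s_0]$, while you invoke continuity of $s\mapsto s^l\gamma(s)$ on the compact interval $[a,S]$. Your variant is marginally more general, and your closing remark that the monotonicity hypothesis from \eqref{g1} is not actually needed for this particular lemma is correct — continuity and positivity are enough.
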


\begin{proof}
In view of assumption \eqref{gamma2}, there are $s_0>a$ and $C>0$ such that $s^l\gamma(s)\leq C$ for all $s\geq s_0$. In addition, the monotonicity of $\gamma$ and the non-negativity of $l$ ensure that $s^l\gamma(s)\leq s_0^l\gamma(a)$ for all $a\leq s\leq s_0$. Thus, $s^l\gamma(s)\leq C$ for all $s\geq a$, from which we deduce that
\begin{equation*}
\Gamma(s)=\int_a^s\gamma(\eta)\mathrm{d}\eta\leq C\int_a^s\eta^{-l}\mathrm{d}\eta=\begin{cases}
C\log(s/a),\quad&\text{when}\;l=1, \\
 \\
\displaystyle{\frac{C(s^{1-l}-a^{1-l})}{1-l}},\quad&\text{when}\;l\neq 1,
\end{cases}
\end{equation*}
as claimed.
\end{proof}

We finally recall the following lemma given in \cite[Lemma~A.1]{Laur1994}  which we shall use later in Section~\ref{sec3} to complete the Alikakos-Moser iterative argument.

\begin{lemma}\label{lmiter}
	Let $\rho>1$, $b\geq0$, $c\in \mathbb{R}$, $C_0\geq1$, $C_1\geq1$, and $\delta_0$ be given numbers such that
	\begin{equation*}
		\delta_0+\frac{c}{\rho-1}>0.
	\end{equation*}
We consider the sequence $(\delta_j)_{j\geq0}$ of real numbers defined by
\begin{equation*}
	\delta_{j+1}=\rho\delta_j+c\,, \qquad j\in\mathbb{N}.
\end{equation*}
Assume further that $(\eta_j)_{j\geq0}$ is a sequence of positive real numbers satisfying 
	\begin{align*}
&\eta_0\leq C_1^{\delta_0},\\
&\eta_{j+1}\leq C_0\delta_{j+1}^{b}\max\{C_1^{\delta_{j+1}},\eta_j^\rho\}\,, \qquad j\in\mathbb{N}\,.
	\end{align*}	
Then the sequence $(\eta_j^{1/\delta_j})_{j\geq0}$ is bounded.
\end{lemma}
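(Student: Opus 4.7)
The plan is to convert the affine recursion for $(\delta_j)$ into a purely multiplicative one via a shift, then take logarithms in the nonlinear inequality for $(\eta_j)$ to linearize it and control the resulting additive recursion by a telescoping, summability-based argument.

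First, explicit resolution of $\delta_{j+1}=\rho\delta_j+c$, with $A:=\delta_0+c/(\rho-1)>0$ by hypothesis, gives $\delta_j=A\rho^j-c/(\rho-1)$. The shifted sequence $\tilde\delta_j:=\delta_j+c/(\rho-1)=A\rho^j$ then obeys the homogeneous rule $\tilde\delta_{j+1}=\rho\tilde\delta_j$, providing the key identity $\rho/\tilde\delta_{j+1}=1/\tilde\delta_j$; in particular $\delta_j>0$ and $\tilde\delta_j/\delta_j\to 1$ for $j$ large. Setting $\alpha_j:=(\log\eta_j)/\tilde\delta_j$, I take the logarithm of $\eta_{j+1}\le C_0\delta_{j+1}^b\max\{C_1^{\delta_{j+1}},\eta_j^\rho\}$ and divide by $\tilde\delta_{j+1}$. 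The key identity converts $\rho(\log\eta_j)/\tilde\delta_{j+1}$ into $\alpha_j$, producing
\[
\alpha_{j+1}\le\varepsilon_j+\max\Bigl\{\frac{\delta_{j+1}}{\tilde\delta_{j+1}}\log C_1,\;\alpha_j\Bigr\},\qquad \varepsilon_j:=\frac{\log C_0+b\log\delta_{j+1}}{\tilde\delta_{j+1}}.
\]

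The explicit form $\tilde\delta_{j+1}=A\rho^{j+1}$ shows $\varepsilon_j=O(j\rho^{-j})$ (and eventually non-negative), so $\sum_j\varepsilon_j$ converges; similarly $\delta_{j+1}/\tilde\delta_{j+1}\to 1$ gives the uniform bound $\beta:=\sup_j(\delta_{j+1}/\tilde\delta_{j+1})\log C_1<\infty$. Setting $\mu_j:=\max\{\beta,\alpha_j\}$ and distinguishing whether $\alpha_{j+1}\le\beta$ or not yields $\mu_{j+1}\le\mu_j+\max\{\varepsilon_j,0\}$, and telescoping produces a uniform bound on $\mu_j$, hence on $\alpha_j$, i.e., on $\eta_j^{1/\tilde\delta_j}$. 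Because $\eta_j^{1/\delta_j}=(\eta_j^{1/\tilde\delta_j})^{\tilde\delta_j/\delta_j}$ with $\tilde\delta_j/\delta_j\to 1$, the boundedness of $(\eta_j^{1/\delta_j})$ then follows, the finitely many initial indices at which $\delta_j$ could fail to be positive being handled separately.

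The main technical point I anticipate is verifying $\sum_j\varepsilon_j<\infty$, which rests entirely on the exponential growth $\tilde\delta_j=A\rho^j$ forced by the hypothesis $\delta_0+c/(\rho-1)>0$; once this is established the rest is a short telescoping.
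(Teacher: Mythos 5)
Your argument is correct. Note first that the paper does not prove this lemma at all: it is imported verbatim from \cite[Lemma~A.1]{Laur1994}, so there is no internal proof to compare against. Your route — solving the affine recursion explicitly, passing to the shifted sequence $\tilde\delta_j=\delta_j+c/(\rho-1)=A\rho^j$ so that $\rho/\tilde\delta_{j+1}=1/\tilde\delta_j$, and then taking logarithms to turn the inequality for $\eta_j$ into $\alpha_{j+1}\le\varepsilon_j+\max\{(\delta_{j+1}/\tilde\delta_{j+1})\log C_1,\alpha_j\}$ with $\sum_j|\varepsilon_j|<\infty$ — is the additive (logarithmic) counterpart of the usual multiplicative iteration used to prove such Moser-type lemmas (raise to the power $1/\delta_{j+1}$ and control the convergent product $\prod_j(C_0\delta_{j+1}^b)^{1/\delta_{j+1}}$), and the telescoping step via $\mu_j=\max\{\beta,\alpha_j\}$ is sound. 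Two small points you should make explicit when writing it up: (i) $\varepsilon_j$ involves $\log\delta_{j+1}$ and so requires $\delta_{j+1}>0$; since $\delta_j=A\rho^j-c/(\rho-1)\to\infty$, this holds from some index $j_0$ on, and starting the telescoping at $j_0$ (with the finite quantity $\alpha_{j_0}$) suffices because only the tail matters for boundedness — this is the same caveat you already flag for the final conversion; (ii) in passing from a bound on $\eta_j^{1/\tilde\delta_j}$ to one on $\eta_j^{1/\delta_j}$, distinguish the cases $\eta_j^{1/\tilde\delta_j}\ge 1$ and $<1$, using that the exponents $\tilde\delta_j/\delta_j$ are positive and bounded for large $j$; this yields a bound of the form $\max\{1,M^{\sup_j\tilde\delta_j/\delta_j}\}$. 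With these details spelled out, the proof is complete, and in the paper's application ($\delta_j=p_j>0$ for all $j$) even the first caveat is vacuous.
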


\section{Global Existence of Classical Solutions to \eqref{cp}}\label{gecs}

In this section, we consider the existence of global classical solutions under the assumptions of Theorem~\ref{TH1}. First, we establish an upper bound for $v$. We next prove that $v$ is  H\"older continuous with respect to both $t$ and $x$, which finally allows us to apply the theory developed by Amann in \cite{Aman1990, Aman1995} for non-autonomous linear parabolic equations to derive $W^{2,p}$-estimates  for $v$.

From now on, we assume that $\gamma$ and $u^{in}$ satisfy \eqref{g0} and \eqref{ini}, respectively, and that $(u,v)$ is the corresponding solution to \eqref{cp} provided by Theorem~\ref{local} and defined on $[0,T_{\mathrm{max}})$. It then follows from \eqref{g0} and \eqref{e00} that 
	\begin{equation}
		\gamma(v(t,x))\le K_* \triangleq K_{v_*}\,, \qquad (t,x) \in [0,T_{\mathrm{max}})\times \bar{\Omega}\,.  \label{zz1}
	\end{equation}
Throughout this section, $C$ and $(C_i)_{i\ge 0}$ denote positive constants depending only on $\gamma$, $u^{in}$, $v_*$, and the parameter $a$ introduced in the definition of the indefinite integral $\Gamma$ of $\gamma$. Dependence upon additional parameters will be indicated explicitly.

\subsection{Time-dependent upper bounds}

First, we derive an upper bound for $v$ based on a comparison argument developed in \cite{FuJi2020,LiJi2020}.

\begin{lemma}\label{tdub}
 The function $v$ satisfies the following key identity:
\begin{equation}\label{keyid}
\partial_t v +u\gamma(v)=(I-\Delta)^{-1}[u\gamma(v)]\;\;\text{ in }\;\; (0,T_{\mathrm{max}})\times \Omega.
\end{equation}
Here $\Delta$ denotes the usual Laplace operator supplemented with homogeneous Neumann boundary conditions.

Moreover, 
\begin{equation}\label{keyA}
v(t,x)\leq v^{in}(x) e^{tK_*}\,, \qquad (t,x) \in [0,T_{\mathrm{max}})\times \bar{\Omega}\,, 
\end{equation}
where $K_*$ is defined in \eqref{zz1} and $v^{in} {\modph =} (I-\Delta)^{-1}u^{in}\in W^{2,p}(\Omega)$ for all $p\in(1,\infty)$.
\end{lemma}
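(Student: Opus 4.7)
The plan is to derive the key identity \eqref{keyid} by formally applying $(I-\Delta)^{-1}$ to the first equation in \eqref{cp}, and then to deduce the pointwise upper bound \eqref{keyA} from the identity by combining a comparison argument with Gronwall's inequality.

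First, I would establish \eqref{keyid} as follows. Differentiating \eqref{cp2} in time gives $(I-\Delta)\partial_t v = \partial_t u$, and substituting \eqref{cp1} on the right side yields $(I-\Delta)\partial_t v = \Delta(u\gamma(v))$. Rewriting the right-hand side as $\Delta(u\gamma(v)) = -(I-\Delta)(u\gamma(v)) + u\gamma(v)$, I obtain $(I-\Delta)\bigl(\partial_t v + u\gamma(v)\bigr) = u\gamma(v)$. Since $(u,v)$ is a classical solution by Theorem~\ref{local}, both $v$ and $u\gamma(v)$ satisfy homogeneous Neumann boundary conditions by \eqref{cp3}, so $\partial_t v + u\gamma(v)$ does as well, and I may invert $(I-\Delta)$ with Neumann data to obtain \eqref{keyid}.

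Next, for the upper bound, I would observe that \eqref{zz1} provides $0 \le u\gamma(v) \le K_* u$ on $(0,T_{\mathrm{max}})\times\Omega$. The weak maximum principle for the elliptic Neumann problem $(I-\Delta)w = f$ gives order-preservation of $(I-\Delta)^{-1}$, so
\begin{equation*}
(I-\Delta)^{-1}\bigl[u\gamma(v)\bigr] \le K_* (I-\Delta)^{-1}u = K_* v,
\end{equation*}
the last equality being \eqref{cp2}. Inserting this into \eqref{keyid} and discarding the non-negative term $u\gamma(v)$ on the left yields the pointwise differential inequality $\partial_t v \le K_* v$ on $(0,T_{\mathrm{max}})\times\bar\Omega$. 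A direct integration (Gronwall's inequality at each spatial point) gives $v(t,x) \le v(0,x) e^{tK_*}$; since $v(0,\cdot) = (I-\Delta)^{-1}u^{in} = v^{in}$, this is \eqref{keyA}. The regularity $v^{in}\in W^{2,p}(\Omega)$ for every $p\in(1,\infty)$ follows from $u^{in}\in C(\bar\Omega)\subset L^\infty(\Omega)$ and standard elliptic regularity.

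I expect the only delicate point to be the careful justification of the inversion of $(I-\Delta)$ in the derivation of \eqref{keyid}, which relies on checking that the boundary condition on $\partial_t v + u\gamma(v)$ is indeed the homogeneous Neumann one compatible with the resolvent; this is where \eqref{cp3} enters crucially. Everything else reduces to a scalar ODE argument after the comparison step.
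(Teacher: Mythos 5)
Your proposal is correct and takes essentially the same route as the paper: apply $(I-\Delta)^{-1}$ to \eqref{cp1} (equivalently, apply $(I-\Delta)$ to $\partial_t v + u\gamma(v)$ and identify the right-hand side), bound the nonlocal term by $K_*v$ via the comparison principle and \eqref{cp2}, and integrate the resulting pointwise inequality $\partial_t v \le K_* v$. The only difference is presentational — you make explicit the check that $\partial_t v + u\gamma(v)$ carries the homogeneous Neumann condition, which the paper leaves implicit — but the underlying argument is identical.
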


\begin{proof}
First, the key identity \eqref{keyid} follows from applying $(I-\Delta)^{-1}$ to both sides of \eqref{cp1} and the property $(I-\Delta)^{-1}[u]=v$ due to \eqref{cp2}. Since $u$ and $\gamma$ are both non-negative, one obtains from the key identity \eqref{keyid} that $\partial_t v\leq (I-\Delta)^{-1}[u\gamma(v)]$ in $(0,T_{\mathrm{max}})\times \Omega$. Furthermore, due to assumption \eqref{g0}, we infer from the comparison principle for elliptic equations, \eqref{cp2},  and \eqref{zz1} that
\begin{equation*}
	(I-\Delta)^{-1}[u\gamma(v)]\leq (I-\Delta)^{-1}[K_* u]=K_* v\,.
\end{equation*} 
Hence $\partial_t v\leq K_* v$ in $(0,T_{\mathrm{max}})\times \Omega$, which gives rise to \eqref{keyA} by  direct integration. To complete the proof, we remark that, since $u^{in} \in C(\overline{\Omega})$, we have $v^{in}\in W^{2,p}(\Omega)$ for all  $p\in(1,\infty)$ by standard regularity theory for elliptic equations.
\end{proof}

\subsection{H\"older estimates for $v$}\label{sec.ir1}

The starting point is the key identity \eqref{keyid} which we combine with \eqref{cp2} to show that $v$ is the solution to a quasilinear parabolic equation in divergence form with right-hand side featuring a local quadratic nonlinearity $|\nabla v|^2$  and a nonlocal  nonlinear but regularizing operator. Specifically, by \eqref{cp2},
\begin{equation*}
u \gamma(v) = - \mathrm{div}(\gamma(v) \nabla v) + \gamma'(v) |\nabla v|^2 + v\gamma(v)\,. 
\end{equation*}
Therefore, introducing
\begin{equation}
f(s,\xi)\triangleq s \gamma(s) + \gamma'(s) |\xi|^2\,, \qquad (s,\xi)\in [0,\infty)\times \mathbb{R}^N\,, \label{rhsf}
\end{equation} and 
\begin{equation}
\varphi\triangleq(I-\Delta)^{-1}[u\gamma(v)]\,, \label{defvarphi}
\end{equation}
and recalling that 
\begin{equation*}
	\nabla \varphi \cdot \mathbf{n}  = 0\,, \qquad (t,x)\in (0,T_{\mathrm{max}})\times\partial\Omega\,,
\end{equation*}
since $\Delta$ denotes the usual Laplace operator supplemented with homogeneous Neumann boundary conditions, we conclude that $v$ solves the initial boundary value problem
\begin{subequations}\label{cpv}
	\begin{align}
	\partial_t v - \mathrm{div}(\gamma(v)\nabla v) & = -f(v,\nabla v) +\varphi\,, \qquad (t,x)\in (0,T_{\mathrm{max}})\times\Omega\,, \label{cpv1} \\
	\nabla v \cdot \mathbf{n} & = 0\,, \qquad (t,x)\in (0,T_{\mathrm{max}})\times\partial\Omega\,, \label{cpv2} \\
	v(0)  & = v^{in}\,, \qquad x\in\Omega\,. \label{cpv3}
	\end{align}
\end{subequations}
Now we fix $T\in(0,T_{\mathrm{max}})$ and set $J=[0,T]$. By  Lemma~\ref{tdub} and \eqref{e00}, we have
\begin{equation}
0 < v_* \le v(t,x) \le v^*(T)\triangleq \|v^{in}\|_{\infty}e^{TK_*}\;\;\text{ in }\;\; J\times \bar{\Omega}\,. \label{bv}
\end{equation}
Owing to the positivity and continuity of $\gamma$ on $(0,\infty)$, there is $\gamma^*(T)>0$ such that
\begin{equation}
\gamma(v)\ge \gamma^*(T) \;\;\text{ in }\;\; J\times \bar{\Omega}\,. \label{upv}
\end{equation} 
A straightforward consequence of \eqref{rhsf}, \eqref{bv}, and the regularity of $\gamma$ is that there are $C_0(T)>0$ and $C_1(T)>0$ such that
\begin{equation}
|f(v,\nabla v)| \le C_0(T) \left( 1 + |\nabla v|^2 \right) \;\;\text{in }\;\; J\times\bar{\Omega}\, \label{irb1} 
\end{equation}
and 
\begin{equation}
0\leq \varphi=(I-\Delta)^{-1}[u\gamma(v)]\leq K_* v\leq C_1(T) \triangleq K_* v^*(T) \;\;\text{in }\;\; J\times\bar{\Omega}.\label{phib}
\end{equation}

\medskip

We now proceed along the lines of \cite[Chapter~V, Section~7]{LSU1968} and \cite[Section~5]{Aman1989} to derive a local energy bound for $v$.

\begin{lemma}\label{lem.impreg1}  There are $\delta(T)\in (0,1]$ and $C_2(T)>0$ such that, if $\vartheta\in C^\infty(J\times\bar{\Omega})$, $0\le \vartheta\le 1$, $\sigma\in\{-1,1\}$, and $h\in\mathbb{R}$ are such that
	\begin{equation}
	\sigma v(t,x) - h \le \delta(T)\,, \qquad (t,x)\in \mathrm{supp}\,\vartheta\,, \label{smallc1}
	\end{equation}
	then
	\begin{align*}
	& \int_\Omega \vartheta^2 (\sigma v(t) - h)_+^2\ \mathrm{d}x + \frac{\gamma^*(T)}{2} \int_{t_0}^t \int_\Omega \vartheta^2 |\nabla (\sigma v(\tau) - h)_+|^2\ \mathrm{d}x\mathrm{d}\tau \\
	& \qquad  \le \int_\Omega \vartheta^2 (\sigma v(t_0) - h)_+^2\ \mathrm{d}x + C_2(T) \int_{t_0}^t \int_\Omega  \left( |\nabla\vartheta|^2 + \vartheta|\partial_t\vartheta| \right) (\sigma v(\tau) - h)_+^2\ \mathrm{d}x\mathrm{d}\tau \\
	& \qquad\quad + C_2(T) \int_{t_0}^t  \int_{A_{h,\vartheta,\sigma}(\tau)} \vartheta\ \mathrm{d}x \mathrm{d}\tau 
	\end{align*}
	for $0\le t_0\le t\le T$, where 
	\begin{equation*}
	A_{h,\vartheta,\sigma}(\tau)\triangleq \left\{ x\in \Omega\ :\ \sigma v(\tau,x) > h \right\}\,, \qquad \tau\in [0,T_{\mathrm{max}})\,.
	\end{equation*}
\end{lemma}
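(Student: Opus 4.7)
The plan is to establish this local energy bound by the classical Caccioppoli-type argument of Ladyzhenskaya--Solonnikov--Ural'tseva, testing the reformulated equation~\eqref{cpv1} against $2\sigma\vartheta^2(\sigma v-h)_+$ and using the smallness hypothesis~\eqref{smallc1} to absorb the quadratic gradient term coming from $\gamma'(v)|\nabla v|^2$ inside $f$.

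Set $w=(\sigma v-h)_+$. Since $\sigma\in\{-1,1\}$, one has $\sigma\nabla w=\nabla v$ and $2\sigma w\,\partial_t v=\partial_t(w^2)$ on $\{w>0\}$. Multiplying~\eqref{cpv1} by $2\sigma\vartheta^2 w$ and integrating over $(t_0,t)\times\Omega$, the time-derivative contribution becomes, after transferring the $\tau$-derivative from $w^2$ onto $\vartheta^2$,
\[
\int_\Omega\vartheta^2 w^2(t)\,\mathrm{d}x - \int_\Omega\vartheta^2 w^2(t_0)\,\mathrm{d}x - \int_{t_0}^t\int_\Omega w^2\,\partial_\tau(\vartheta^2)\,\mathrm{d}x\mathrm{d}\tau,
\]
the last term being controlled by $2\int\int\vartheta|\partial_\tau\vartheta|w^2$ as required. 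Integration by parts in the diffusion term, with vanishing boundary contribution by~\eqref{cpv2}, produces $2\int\gamma(v)\vartheta^2|\nabla w|^2+4\int\gamma(v)\vartheta w\nabla\vartheta\cdot\nabla w$; applying Young's inequality together with the bounds $\gamma^*(T)\le\gamma(v)\le K_*$ stemming from~\eqref{zz1} and~\eqref{upv}, the cross term is absorbed, leaving $\gamma^*(T)\int\vartheta^2|\nabla w|^2$ at the cost of an additional term of the form $C\int|\nabla\vartheta|^2 w^2$.

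The delicate point is the treatment of the forcing $-f(v,\nabla v)+\varphi$. Using~\eqref{phib} and the fact that $\vartheta^2 w\le\vartheta$ on the superlevel set $A_{h,\vartheta,\sigma}$ (since $\vartheta\le 1$ and $w\le\delta(T)\le 1$ there), the $\varphi$-contribution and the linear part of the $f$-contribution (obtained via~\eqref{irb1}) are both bounded by a constant times $\int\int_{A_{h,\vartheta,\sigma}(\tau)}\vartheta$. The quadratic gradient part of the $f$-contribution is majorized, via~\eqref{irb1} and~\eqref{smallc1}, by
\[
2C_0(T)\int_{t_0}^t\int_\Omega\vartheta^2 w|\nabla w|^2\,\mathrm{d}x\mathrm{d}\tau \;\le\; 2C_0(T)\delta(T)\int_{t_0}^t\int_\Omega\vartheta^2|\nabla w|^2\,\mathrm{d}x\mathrm{d}\tau.
\]
Choosing $\delta(T):=\min\{1,\gamma^*(T)/(4C_0(T))\}$ allows this to be absorbed into half of the $\gamma^*(T)\int\vartheta^2|\nabla w|^2$ isolated above, leaving the announced coefficient $\gamma^*(T)/2$ on the left-hand side and yielding the stated inequality with an appropriate constant $C_2(T)$ depending on $\gamma^*(T)$, $K_*$, $C_0(T)$, and $C_1(T)$.

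The main obstacle is precisely the term $\gamma'(v)|\nabla v|^2$ in $f$: it has no definite sign, is of the same order as the principal diffusion term, and cannot be dominated by any lower-order quantity. Its absorption is possible only if the prefactor $w$ is made uniformly small on $\mathrm{supp}\,\vartheta$, which is exactly what the oscillation hypothesis~\eqref{smallc1} enforces; the explicit threshold $\delta(T)$ is dictated by the quantitative balance between the ellipticity constant $\gamma^*(T)$ and the Lipschitz-type constant $C_0(T)$ available in the range of $v$ fixed by~\eqref{bv}.
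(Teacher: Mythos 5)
Your proposal is correct and follows essentially the same route as the paper: test \eqref{cpv1} with $\sigma\vartheta^2(\sigma v-h)_+$, integrate by parts using \eqref{cpv2}, absorb the cross term via Young's inequality and the bounds \eqref{zz1}, \eqref{upv}, control the $\varphi$- and zeroth-order $f$-contributions by $\int_{A_{h,\vartheta,\sigma}}\vartheta$ through \eqref{phib}, \eqref{irb1} and \eqref{smallc1}, and absorb the quadratic gradient term with the same choice $\delta(T)=\min\{1,\gamma^*(T)/(4C_0(T))\}$. The only cosmetic differences (the factor $2$ in the test function, integrating in time directly rather than first deriving a differential inequality, and bounding the $\varphi$-term by its absolute value instead of using its sign when $\sigma=-1$) do not affect the argument.
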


\begin{proof}
 Let $\delta\in (0,1]$ to be specified later. 
	It follows from \eqref{cpv1}, \eqref{cpv2}, and \eqref{irb1} that
	\begin{align*}
	\frac{1}{2} \frac{d}{dt} \int_\Omega \vartheta^2 (\sigma v - h)_+^2\ \mathrm{d}x & = \sigma \int_\Omega \vartheta^2 (\sigma v - h)_+ \partial_t v \ \mathrm{d}x + \int_\Omega  (\sigma v - h)_+^2 \vartheta \partial_t\vartheta\ \mathrm{d}x \\
	& = -\sigma \int_\Omega \gamma(v) \nabla \left[ \vartheta^2 (\sigma v - h)_+ \right]\cdot \nabla v\ \mathrm{d}x -\sigma \int_\Omega \vartheta^2 (\sigma v - h)_+ f(v,\nabla v)\ \mathrm{d}x \\
	& \quad + \sigma \int_\Omega \vartheta^2 (\sigma v - h)_+ \varphi\ \mathrm{d}x
	+ \int_\Omega (\sigma v - h)_+^2 \vartheta \partial_t\vartheta\ \mathrm{d}x \\
	& \le - \int_\Omega \gamma(v) \vartheta^2 |\nabla (\sigma v - h)_+|^2\ \mathrm{d}x + 2 \int_\Omega \gamma(v) \vartheta |\nabla\vartheta| (\sigma v - h)_+ |\nabla v|\ \mathrm{d}x \\
	& \quad + \int_\Omega (\sigma v - h)_+^2 \vartheta |\partial_t\vartheta|\ \mathrm{d}x + C_0(T) \int_\Omega \vartheta^2 (\sigma v - h)_+ \left( 1 + |\nabla v|^2 \right)\ \mathrm{d}x \\
	& \quad + \sigma\int_\Omega \vartheta^2 (\sigma v - h)_+\varphi \ \mathrm{d}x \\
	& = - \int_\Omega \gamma(v) \vartheta^2 |\nabla (\sigma v - h)_+|^2\ \mathrm{d}x + C_0(T) \int_\Omega \vartheta^2 (\sigma v - h)_+ |\nabla (\sigma v - h)_+|^2\ \mathrm{d}x \\
	& \quad + \int_\Omega (\sigma v - h)_+^2 \vartheta |\partial_t\vartheta|\ \mathrm{d}x + \sum_{k=1}^3 J_k\,,
	\end{align*}
	where
	\begin{align*}
	J_1 & \triangleq 2 \int_\Omega \gamma(v) \vartheta |\nabla\vartheta| (\sigma v - h)_+ |\nabla v|\ \mathrm{d}x\,, \\
	J_2 & \triangleq C_0(T) \int_\Omega \vartheta^2 (\sigma v - h)_+ \ \mathrm{d}x\,, \\
	J_3 & \triangleq \sigma\int_\Omega \vartheta^2 (\sigma v - h)_+ \varphi\ \mathrm{d}x \,.
	\end{align*}
	On the one hand, we infer from \eqref{g0}, \eqref{zz1}, and Young's inequality that 
	\begin{align*}
	J_1 & = 2 \int_\Omega \gamma(v) \vartheta |\nabla\vartheta| (\sigma v - h)_+ |\nabla (\sigma v - h)_+|\ \mathrm{d}x \\
	& \le \frac{1}{2} \int_\Omega \gamma(v) \vartheta^2 |\nabla (\sigma v - h)_+|^2\ \mathrm{d}x + 2 \int_\Omega \gamma(v) (\sigma v - h)_+^2 |\nabla\vartheta|^2\ \mathrm{d}x  \\
	& \le \frac{1}{2} \int_\Omega \gamma(v) \vartheta^2 |\nabla (\sigma v - h)_+|^2\ \mathrm{d}x + 2 K_* \int_\Omega  (\sigma v - h)_+^2 |\nabla\vartheta|^2\ \mathrm{d}x\,. 
	\end{align*}
	On the other hand, it follows from  \eqref{smallc1} that 
	\begin{equation*}
	J_2 \leq C_0(T) \delta \int_{A_{h,\vartheta,\sigma}}\vartheta\ \mathrm{d}x \leq C_0(T) \int_{A_{h,\vartheta,\sigma}}\vartheta\ \mathrm{d}x\,.
	\end{equation*}
Moreover, we observe that, if $\sigma=-1$, then $J_3\leq 0$ while, if $\sigma=1$, then we deduce from \eqref{phib} and \eqref{smallc1} that
\begin{equation*}
J_3 \le C_1(T) \delta\int_{A_{h,\vartheta,\sigma}}\vartheta \ \mathrm{d}x \leq C_1(T)\int_{A_{h,\vartheta,\sigma}}\vartheta\ \mathrm{d}x.
\end{equation*}	
	
Gathering the above inequalities gives
	\begin{align*}
	\frac{1}{2} \frac{d}{dt} \int_\Omega \vartheta^2 (\sigma v - h)_+^2\ \mathrm{d}x & \le -\frac{1}{2} \int_\Omega \gamma(v) \vartheta^2 |\nabla (\sigma v - h)_+|^2\ \mathrm{d}x \\
	& \quad + \int_\Omega  \left[ 2 K_* |\nabla\vartheta|^2 + \vartheta|\partial_t\vartheta| \right] (\sigma v - h)_+^2\ \mathrm{d}x \\
	& \quad + C_0(T) \int_\Omega \vartheta^2 (\sigma v - h)_+ |\nabla (\sigma v - h)_+|^2\ \mathrm{d}x \\
	& \quad + \left( C_0(T) + C_1(T) \right) \int_{A_{h,\vartheta,\sigma}} \vartheta\ \mathrm{d}x  \,. 
	\end{align*}
	Hence, owing to \eqref{bv}, \eqref{smallc1}, and the lower bound \eqref{upv} on $\gamma(v)$,
	\begin{align*}
	\frac{1}{2} \frac{d}{dt} \int_\Omega \vartheta^2 (\sigma v - h)_+^2\ \mathrm{d}x & \le \left( C_0(T) \delta - \frac{\gamma^*(T)}{2} \right) \int_\Omega \vartheta^2 |\nabla (\sigma v - h)_+|^2\ \mathrm{d}x \\
	& \quad + \int_\Omega  \left[ 2 K_* |\nabla\vartheta|^2 + \vartheta|\partial_t\vartheta| \right] (\sigma v - h)_+^2\ \mathrm{d}x \\
	& \quad + \left( C_0(T) + C_1(T) \right)  \int_{A_{h,\vartheta,\sigma}} \vartheta\ \mathrm{d}x \,. 
	\end{align*}
	Setting 
	\begin{equation*}
	\delta(T) \triangleq \min\left\{ 1 , \frac{\gamma^*(T)}{4C_0(T)} \right\} \;\;\text{ and }\;\; C_2(T) \triangleq 2 \max\left\{ 1 , 2 K_* , C_0(T) + C_1(T) \right\}\,,
	\end{equation*}
	and taking $\delta=\delta(T)$ in the above inequality, we end up with
	\begin{align*}
	& \frac{d}{dt} \int_\Omega \vartheta^2 (\sigma v - h)_+^2\ \mathrm{d}x + \frac{\gamma^*(T)}{2} \int_\Omega \vartheta^2 |\nabla (\sigma v - h)_+|^2\ \mathrm{d}x \\
	& \qquad  \le C_2(T) \left[ \int_\Omega  \left( |\nabla\vartheta|^2 + \vartheta|\partial_t\vartheta| \right) (\sigma v - h)_+^2\ \mathrm{d}x +  \int_{A_{h,\vartheta,\sigma}} \vartheta\ \mathrm{d}x \right] \,. 
	\end{align*}
	Integrating the above inequality with respect to time over $(t_0,t)$ completes the proof.
\end{proof}

We are now in a position to apply \cite[Chapter~II, Theorem~8.2]{LSU1968} to obtain a H\"older estimate for $v$.

\begin{proposition}\label{prop.impreg2}
 There is $\alpha_T\in (0,1)$ depending on $N$, $\Omega$, $\gamma$, $v^{in}$, $v_*$, $a$, and $T$ such that $v\in C^{\alpha_T}(J\times\bar{\Omega})$.
\end{proposition}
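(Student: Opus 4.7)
The plan is to verify that $v$ belongs to the parabolic De Giorgi class $\mathfrak{B}_2$ introduced in \cite[Chapter~II, \S7]{LSU1968} and then invoke \cite[Chapter~II, Theorem~8.2]{LSU1968}, which asserts that every function in this class admits a uniform H\"older modulus of continuity on the closed cylinder, with exponent and constant depending only on the class parameters. The a priori bounds $v_* \le v \le v^*(T)$ from \eqref{bv} restrict attention to truncation levels $h \in [-v^*(T), v^*(T)]$, and Lemma~\ref{lem.impreg1} is designed to supply precisely the local energy inequality needed to certify membership in $\mathfrak{B}_2$, once appropriate cut-off functions are inserted.

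Concretely, I would fix a reference radius $r_0 > 0$ small enough that the parabolic cylinders $Q(\tau_0,t_0;r)\triangleq (\tau_0,t_0)\times (B(x_0,r)\cap\Omega)$ lie inside $J\times\bar{\Omega}$ for $r \in (0,r_0]$, and choose a standard cut-off $\vartheta$ equal to $1$ on $Q(\tau_0,t_0;r/2)$ and supported in $Q(\tau_0,t_0;r)$ with $|\nabla\vartheta|\le C/r$ and $|\partial_t\vartheta|\le C/r^2$. Plugging this $\vartheta$ into Lemma~\ref{lem.impreg1} and taking the supremum in time on the left-hand side yields the combined $L^\infty_t L^2_x$--$L^2_t H^1_x$ estimate
$$
\sup_{\tau \in (\tau_0,t_0)} \int_{B(x_0,r/2)\cap\Omega}(\sigma v(\tau) - h)_+^2\,\mathrm{d}x + \int_{Q(\tau_0,t_0;r/2)} |\nabla(\sigma v - h)_+|^2 \le \frac{C(T)}{r^2}\int_{Q(\tau_0,t_0;r)} (\sigma v - h)_+^2 + C(T)|Q(\tau_0,t_0;r)\cap A_{h,\vartheta,\sigma}|,
$$
which is the defining inequality for $\mathfrak{B}_2$ in the sense of \cite{LSU1968}. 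The smallness restriction \eqref{smallc1} is harmless because $v$ is bounded: one partitions the range of admissible $h$ into finitely many subintervals of length $\delta(T)$ and applies the class-membership argument on each of them, a standard reduction that pervades the De Giorgi theory.

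The main obstacle is to cover three different types of points of $J\times\bar{\Omega}$: interior points of $(0,T)\times\Omega$, spatial boundary points in $(0,T)\times\partial\Omega$, and the initial slice $\{0\}\times\bar{\Omega}$. The interior case is the direct content of \cite[Chapter~II, Theorem~8.2]{LSU1968}. Near $\partial\Omega$, the homogeneous Neumann condition \eqref{cpv2} permits local flattening of the boundary followed by even reflection, producing an extension of $v$ that remains in $\mathfrak{B}_2$, so the boundary version of the same theorem applies and yields H\"older continuity up to $\partial\Omega$. For regularity down to $t=0$, the elliptic estimate $v^{in}\in W^{2,p}(\Omega)$ for every $p\in(1,\infty)$ makes $v^{in}$ H\"older continuous on $\bar{\Omega}$, and Lemma~\ref{lem.impreg1} applied with $t_0=0$ transfers this initial regularity via the usual De Giorgi iteration. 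Assembling the three estimates produces an exponent $\alpha_T \in (0,1)$ depending only on $N$, $\Omega$, $\gamma$, $v^{in}$, $v_*$, $a$, and $T$, and hence $v \in C^{\alpha_T}(J\times\bar{\Omega})$ as claimed.
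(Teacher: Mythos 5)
Your proposal is correct and follows essentially the same route as the paper: verify, via the local energy estimate of Lemma~\ref{lem.impreg1}, that $v$ lies in the parabolic De~Giorgi--Ladyzhenskaya class of \cite[Chapter~II, \S 7]{LSU1968} (the smallness restriction \eqref{smallc1} matching the level-set parameter $\delta$ in that class) and then invoke \cite[Chapter~II, Theorem~8.2]{LSU1968}. The only difference is in handling the lateral boundary and initial slice: you propose flattening plus even reflection and a separate iteration at $t=0$, whereas the paper simply observes that the test functions in Lemma~\ref{lem.impreg1} need not vanish on the parabolic boundary, so that \cite[Chapter~II, Remarks~7.2 and~8.1, Lemma~8.1]{LSU1968} give membership in the class $\hat{\mathcal{B}}_2$ on the closed cylinder and H\"older continuity up to the boundary directly, using the smoothness of $\partial\Omega$ and the H\"older continuity of $v^{in}$.
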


\begin{proof}
It follows from Lemma~\ref{lem.impreg1} that the estimate \cite[Chapter~II, Equation~(7.5)]{LSU1968} holds true (with parameters $q=r=2(1+\kappa)=(2N+4)/N$ satisfying \cite[Chapter~II, equation~(7.3)]{LSU1968}). Note that we do not require the test function $\vartheta$ in Lemma~\ref{lem.impreg1} to be compactly supported in $(0,T)\times \Omega$. Consequently, according to \cite[Chapter~II, Remark~7.2 and Remark~8.1]{LSU1968}, there is $C(T)$ such that $v$ belongs to the class $\hat{\mathcal{B}}_2([0,T]\times\bar{\Omega},v^*(T), C(T),(2N+4)/N,\delta(T),2/N)$. Taking also into account the assumed smoothness of the boundary of $\Omega$ and the H\"older continuity of $v^{in}\in C^{\alpha_0}(\bar{\Omega})$ for some $\alpha_0\in(0,1)$, we then infer from \cite[Chapter~II, Lemma~8.1 \& Theorem~8.2]{LSU1968} that $v\in C^{\alpha_T}([0,T]\times \bar{\Omega})$ for some $\alpha_T\in (0,1)$  depending on $T$, but also on other parameters as indicated in the statement of Proposition~\ref{prop.impreg2}.
\end{proof}

\subsection{$W^{2,p}$-estimates for $v$}\label{sec.ir2}

Thanks to the just derived H\"older estimates on $v$, we may now proceed as in \cite[Section~6]{Aman1989} to establish  bounds on the trajectory $\{v(t)\ :\ t\in [0,T_{\mathrm{max}})\}$ in appropriate Sobolev spaces which do not depend on $T_{\mathrm{max}}$. To this end, we proceed in two steps and begin with a bound in fractional Sobolev spaces.

\begin{lemma}\label{lem.impreg3} Let  $T\in(0,T_{\mathrm{max}})$. For any	$p\in(1,\infty)$ and $\theta\in \left( \frac{1+p}{2p},1 \right)$, there is $C_3(T,p,\theta)>0$ such that
	\begin{equation*}
	\|v(t)\|_{W^{2\theta,p}} \le C_3(T,p,\theta)\,, \qquad t\in [0,T]\,. 
	\end{equation*}
\end{lemma}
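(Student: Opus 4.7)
The plan is to recast the problem for $v$ as a linear non-autonomous parabolic equation of the form $\partial_t v + A(t) v = g(t)$, to which Amann's theory from \cite{Aman1995} applies, and then to read off the $W^{2\theta,p}$ estimate from the smoothing properties of the associated parabolic evolution operator. First, I would combine \eqref{cp2} with the key identity \eqref{keyid} to rewrite \eqref{cpv1} in non-divergence form,
\begin{equation*}
\partial_t v - \gamma(v)\Delta v = -v\gamma(v) + \varphi =: g\,,
\end{equation*}
which is precisely \eqref{kid}. Thanks to \eqref{bv}, \eqref{zz1}, and \eqref{phib}, the right-hand side $g$ lies in $L^\infty(J\times\Omega)$, hence in $L^\infty(J;L^p(\Omega))$, with a bound depending only on $T$; crucially, the troublesome quadratic gradient term appearing in \eqref{cpv1} is absorbed on the left-hand side and has disappeared from the source.

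Next, I would view the family $A(t)u := -\gamma(v(t,\cdot))\Delta u$, supplemented with homogeneous Neumann boundary conditions, as a non-autonomous family of unbounded operators on $L^p(\Omega)$ with common domain $W^{2,p}_{\mathcal{N}}(\Omega)$, the subspace of $W^{2,p}(\Omega)$ encoding the Neumann trace. By \eqref{zz1} and \eqref{upv} each $A(t)$ is uniformly elliptic, hence sectorial, and Proposition~\ref{prop.impreg2} ensures that $t\mapsto \gamma(v(t,\cdot))$ is H\"older continuous from $J$ into $C(\bar\Omega)$. These are exactly the ingredients required in Amann's construction of a parabolic evolution operator $U(t,s)$, $0\le s\le t\le T$, on $L^p(\Omega)$ satisfying the analytic-semigroup-type bounds
\begin{equation*}
\|U(t,s)\|_{\mathcal{L}(L^p,W^{2\theta,p})} \le C(T)(t-s)^{-\theta}\,, \qquad 0\le s<t\le T\,,
\end{equation*}
for every $\theta\in[0,1)$, together with $\|U(t,0)v^{in}\|_{W^{2\theta,p}}\le C(T)\|v^{in}\|_{W^{2,p}}$; these are the quantitative estimates that I would extract from \cite[Chapter~II and Chapter~III]{Aman1995}.

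Finally, I would plug $v$ into the Duhamel-type representation
\begin{equation*}
v(t) = U(t,0)v^{in} + \int_0^t U(t,s) g(s)\, \mathrm{d}s\,, \qquad t\in J\,,
\end{equation*}
and bound each piece separately. The initial datum $v^{in}=(I-\Delta)^{-1}u^{in}$ belongs to $W^{2,p}(\Omega)$ and satisfies the Neumann boundary condition; the lower bound $\theta>(1+p)/(2p)$, equivalent to $2\theta>1+1/p$, is exactly the threshold above which the real-interpolation space $(L^p(\Omega),W^{2,p}_{\mathcal{N}}(\Omega))_{\theta,p}$ coincides with the subspace of $W^{2\theta,p}(\Omega)$ incorporating the Neumann trace, so $v^{in}$ fits in this space with a controlled norm. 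The upper bound $\theta<1$ ensures the integrability of the kernel $(t-s)^{-\theta}$ up to $s=t$, and the estimate
\begin{equation*}
\|v(t)\|_{W^{2\theta,p}} \le C(T)\|v^{in}\|_{W^{2,p}} + C(T)\|g\|_{L^\infty(J;L^p)} \int_0^t (t-s)^{-\theta}\, \mathrm{d}s \le C_3(T,p,\theta)
\end{equation*}
follows at once. I expect the main obstacle to be not the Duhamel computation itself but the verification that the H\"older regularity of $\gamma\circ v$ delivered by Proposition~\ref{prop.impreg2} is strong enough to fulfil Amann's hypotheses on $t\mapsto A(t)$; this is precisely the point where the H\"older continuity established in Section~\ref{sec.ir1} is indispensable, and where the choice of the non-divergence form reformulation pays off by keeping the source term independent of $\nabla v$.
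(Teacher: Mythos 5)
Your proposal is correct and follows essentially the same route as the paper: rewrite \eqref{cpv1} in the non-divergence form $\partial_t v - \gamma(v)\Delta v = \varphi - v\gamma(v)$ so that the quadratic gradient term cancels, invoke the H\"older regularity of $t\mapsto\gamma(v(t,\cdot))$ from Proposition~\ref{prop.impreg2} to verify Amann's hypotheses for the non-autonomous family $-\gamma(v(t))\Delta$ with Neumann boundary conditions, and combine the resulting evolution-operator smoothing estimate with the Duhamel representation and the $L^\infty_t L^p_x$ bound on the source. The only cosmetic difference is that the paper bounds $\|\tilde U(t,0)v^{in}\|_{W^{2\theta,p}}$ via the $\mathcal{L}(W^{2\theta,p}_{\mathcal{B}})$-norm of $\tilde U(t,0)$, whereas you go through the $\mathcal{L}(W^{2,p},W^{2\theta,p})$ bound; both suffice since $v^{in}\in W^{2,p}(\Omega)$.
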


\begin{proof} We set $D_T \triangleq (v_*/2,2v^*(T))$ and $J \triangleq [0,T]$, where $v_*$ and $v^*(T)$ are defined in \eqref{e00} and \eqref{bv}, respectively. For $s\in D_T$, we define the elliptic operator $\mathcal{A}(s)$ by $\mathcal{A}(s)z \triangleq -\gamma(s) \Delta z$ and the boundary operator $\mathcal{B}z \triangleq \nabla z\cdot \mathbf{n}$. We point out that, since $\min_{D_T}\{\gamma\}>0$ due to \eqref{g0}, the operator $\mathcal{A}(s)$ satisfies \cite[Eq.~(4.6)]{Aman1990} for all $s\in D_T$ and is thus strongly uniformly elliptic for $s\in D_T$. Consequently, $(\mathcal{A}(s),\mathcal{B})$ is normally elliptic for all $s\in D_T$ by \cite[Theorem~4.2]{Aman1990}.  For $t\in J$, let $\tilde{\mathcal{A}}(t)$ be the $L^p$-realization of $(\mathcal{A}(v(t)),\mathcal{B})$ with domain 
\begin{equation*}
	W_{\mathcal{B}}^{2,p}(\Omega) \triangleq \{ z \in W^{2,p}(\Omega)\ :\ \nabla z\cdot \mathbf{n} = 0 \;\;\text{ on}\;\; \partial\Omega \}\,.
\end{equation*}
As in the proof of \cite[Theorem~6.1]{Aman1989}, the assumptions on $\gamma$ and Proposition~\ref{prop.impreg2} guarantee that 
\begin{equation*}
	\tilde{\mathcal{A}} \in BUC^{\alpha_T}(J, \mathcal{L}(W^{2,p}(\Omega),L^p(\Omega)))
\end{equation*}
and that $\tilde{\mathcal{A}}(J)$ is a \textit{regularly bounded subset} of $C^{\alpha_T}(J,\mathcal{H}(W^{2,p}(\Omega),L^p(\Omega)))$ in the sense of \cite[Section~4]{Aman1988} (or, equivalently, the condition \cite[(II.4.2.1)]{Aman1995} is satisfied). By \cite[Theorem~A.1]{Aman1990}, see also \cite[Theorem~II.4.4.1]{Aman1995}, there is a unique parabolic fundamental solution $\tilde{U}$ associated to $\{ \tilde{\mathcal{A}}(t)\ :\ t\in J \}$ and there exist positive constants $M_T$ and $\omega_T$ depending on $T$ such that 
\begin{equation}
	\|\tilde{U}(t,\tau)\|_{\mathcal{L}(W^{2,p}(\Omega))} + 	\|\tilde{U}(t,\tau)\|_{\mathcal{L}(L^p(\Omega))} + (t-\tau) 	\|\tilde{U}(t,\tau)\|_{\mathcal{L}(L^p(\Omega),W^{2,p}(\Omega))} \le M_T e^{\omega_T(t-\tau)} \label{irb2}
\end{equation}
for $0\le \tau < t\leq T$. Since $\theta\in \left( \frac{1+p}{2p},1 \right)$, it follows from \cite[Theorem~5.2]{Aman1993} that
	\begin{equation*}
	\left( L^p(\Omega) , W_{\mathcal{B}}^{2,p}(\Omega) \right)_{\theta,p} \doteq W_{\mathcal{B}}^{2\theta,p}(\Omega) \triangleq \{ z \in W^{2\theta,p}(\Omega)\ :\ \nabla z\cdot \mathbf{n} = 0 \;\;\text{ on}\;\; \partial\Omega \}\,,
	\end{equation*}
	and we infer from \cite[Lemma~II.5.1.3]{Aman1995} that there is $M_{\theta,T}>0$  depending on $T$ such that
	\begin{equation}
	\|\tilde{U}(t,\tau)\|_{\mathcal{L}(W_{\mathcal{B}}^{2\theta,p}(\Omega))} + (t-\tau)^\theta \|\tilde{U}(t,\tau)\|_{\mathcal{L}(L^p(\Omega),W_{\mathcal{B}}^{2\theta,p}(\Omega))} \le M_{\theta,T}  e^{\omega_T(t-\tau)} \label{irb3}
	\end{equation}
	for $0\le \tau<t\leq T$. We deduce from \eqref{cpv} that $v$ solves 
	\begin{equation}
	\begin{split}
	\partial_t v + \tilde{\mathcal{A}}(\cdot) v  & = F\,, \qquad t\in J\,, \\
	v(0) & = v^{in} \,, 
	\end{split}\label{irb4}
	\end{equation}
	where
	\begin{equation*}
	F =  \varphi - v\gamma(v)\,, \qquad (t,x)\in J\times \Omega\,. 
	\end{equation*}
	We recall that,  according to \eqref{zz1}, \eqref{bv}, and \eqref{phib}
		\begin{equation}
	\| F(t)\|_\infty \le C_4(T) \triangleq C_1(T) + v^*(T) K_*\,, \qquad t\in J\,, \label{irb5}
	\end{equation}
 	while the continuity of $u$ and $v$, see Theorem~\ref{local}, ensures that
	\begin{equation}
	F \in C(J\times\bar{\Omega})\,. \label{irb8}
	\end{equation}
	
Owing to Theorem~\ref{local},  \eqref{cpv}, and \eqref{irb8}, we observe that $v\in C(J, L^p(\Omega))\cap C^1((0,T], L^p(\Omega))$ is a solution to the linear initial-value problem \eqref{irb4} on $J$ in the sense of \cite[Section~II.1.2]{Aman1995} with $F\in C(J,L^p(\Omega))$ and $v(t)\in \mathrm{dom}(\tilde{\mathcal{A}}(t))$ for all $t\in (0,T]$. Since $\tilde{U}$ is a parabolic fundamental solution in the sense of \cite[Section~II.2.1]{Aman1995}, we are in a position to apply \cite[Remarks~II.2.1.2 (a)]{Aman1995} to conclude that $v$ has the representation formula
	\begin{equation}
	v(t) =  \tilde{U}(t,0) v^{in} + \int_0^t \tilde{U}(t,\tau) F(\tau)\ \mathrm{d}\tau\,, \qquad t\in J\,.\label{irb9}
	\end{equation}
	   We then infer from  \eqref{irb3}, \eqref{irb5}, and \eqref{irb9} that, for $t\in J $,
	\begin{align}
	\|v(t)\|_{W^{2\theta,p}} & \le M_{\theta,T} e^{\omega_{T} t} \|v^{in}\|_{W^{2\theta,p}} + M_{\theta,T} \int_0^t (t-\tau)^{-\theta} e^{\omega_{T} (t-\tau)} \|F(\tau)\|_p\ \mathrm{d}\tau \nonumber \\
	& \le M_{\theta,T} e^{\omega_{T} T} \|v^{in}\|_{W^{2,p}} + C_4(T) M_{\theta,T} |\Omega|^{1/p} \int_0^t (t-\tau)^{-\theta} e^{\omega_{T} (t-\tau)} \ \mathrm{d}\tau\,. \label{irb11}
	\end{align}
Since
	\begin{equation*}
\int_0^t (t-\tau)^{-\theta} e^{\omega_{T} (t-\tau)} \ \mathrm{d}\tau=	\int_0^t \tau^{-\theta} e^{\omega_{T} \tau}\ \mathrm{d}\tau \leq \frac{T^{1-\theta}e^{\omega_{T} T}}{1-\theta}\,,  \qquad t\in J\,,
	\end{equation*}
we conclude from \eqref{irb11} that
	\begin{equation*}
	\|v(t)\|_{W^{2\theta,p}} \le C(T,p,\theta)\,, \qquad t\in J\,,
	\end{equation*}
and the proof is complete.
\end{proof}

We now exploit further the parabolic feature of \eqref{cpv} to extend Lemma~\ref{lem.impreg3} to $\theta=1$.

\begin{proposition}\label{prop.impreg4}
	Let $T\in(0, T_{\mathrm{max}})$. For any  $p\in (N,\infty)$,  there is $C_5(T,p)>0$ such that
	\begin{equation*}
	\|v(t)\|_{W^{2,p}} \le C_5(T,p)\,, \qquad t\in [0,T]\,. 
	\end{equation*}
\end{proposition}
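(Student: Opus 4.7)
The plan is to leverage the fractional Sobolev bound from Lemma~\ref{lem.impreg3} and to extract one more derivative by feeding it back into the representation formula \eqref{irb9}. Fix $p\in(N,\infty)$ and pick $\theta_0\in((1+p)/(2p),1)$ close enough to $1$ that the Sobolev embedding $W^{2\theta_0,p}(\Omega)\hookrightarrow W^{1,\infty}(\Omega)$ holds; this is possible since $p>N$ gives $1/2+N/(2p)<1$. Lemma~\ref{lem.impreg3} then yields a uniform bound $\|v(t)\|_{W^{2\theta_0,p}}\le C(T,p,\theta_0)$ on $J=[0,T]$ and, as a consequence, a uniform $L^\infty(J\times\Omega)$-bound on $\nabla v$. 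Together with \eqref{bv} and the smoothness of $\gamma$, this immediately bounds $f(v,\nabla v)=v\gamma(v)+\gamma'(v)|\nabla v|^2$ and $\gamma(v)\nabla v$ in $L^\infty(J\times\Omega)$ as well.

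The heart of the argument is to bound $F=\varphi-v\gamma(v)$ in $W^{2\alpha,p}$ for some fixed $\alpha\in(0,1)$ uniformly on $J$. For the local part $v\gamma(v)$, since $s\mapsto s\gamma(s)$ is $C^3$ on the compact interval $[v_*,v^*(T)]$ in which $v$ takes its values, composition preserves the $W^{2\theta_0,p}$-regularity of $v$. The difficulty lies in $\varphi=(I-\Delta)^{-1}[u\gamma(v)]$, because we have no uniform $L^p$-bound on $u$ at our disposal. To sidestep this, I would exploit the identity
\[ u\gamma(v)=(v-\Delta v)\gamma(v)=f(v,\nabla v)-\mathrm{div}\bigl(\gamma(v)\nabla v\bigr), \]
which follows from \eqref{cp2} together with $\mathrm{div}(\gamma(v)\nabla v)=\gamma(v)\Delta v+\gamma'(v)|\nabla v|^2$. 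By linearity I can then split $\varphi=\varphi_1+\varphi_2$, where $(I-\Delta)\varphi_1=f(v,\nabla v)$ and $(I-\Delta)\varphi_2=-\mathrm{div}(\gamma(v)\nabla v)$, both under homogeneous Neumann boundary conditions. Standard elliptic regularity delivers $\|\varphi_1\|_{W^{2,p}}\le C\|f(v,\nabla v)\|_p$ and $\|\varphi_2\|_{W^{1,p}}\le C\|\gamma(v)\nabla v\|_p$, both uniformly in $t\in J$ thanks to the bounds from the preceding paragraph. Hence $\varphi\in W^{1,p}$, and so $F\in W^{2\alpha,p}$ for every $\alpha\in(0,1/2)$, with a bound uniform on $J$.

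The conclusion is then obtained from the representation formula \eqref{irb9}. Interpolating between the two halves of \eqref{irb2}, one derives from \cite[Lemma~II.5.1.3]{Aman1995} that, for $\alpha\in(0,1)$,
\[ \|\tilde U(t,\tau)\|_{\mathcal{L}(W_{\mathcal{B}}^{2\alpha,p},W_{\mathcal{B}}^{2,p})}\le C(T,p,\alpha)\,(t-\tau)^{-(1-\alpha)}e^{\omega_T(t-\tau)}, \qquad 0\le\tau<t\le T. \]
Plugging this together with \eqref{irb2} into \eqref{irb9} and using the uniform $W^{2\alpha,p}$-bound on $F$ gives
\[ \|v(t)\|_{W^{2,p}}\le M_Te^{\omega_T T}\|v^{in}\|_{W^{2,p}}+C\int_0^t(t-\tau)^{-(1-\alpha)}e^{\omega_T(t-\tau)}\,\mathrm{d}\tau\le C_5(T,p), \]
since $1-\alpha<1$ makes the singularity at $\tau=t$ integrable. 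The step I expect to be the most delicate is precisely the regularity of $\varphi$: rewriting $u\gamma(v)$ so that the troublesome factor $\Delta v$ is absorbed into a divergence trades the missing $L^p$-control of $u$ for elliptic regularity with divergence-form data, which is exactly what allows a positive amount of fractional smoothness to be recovered for $\varphi$, and hence for $F$.
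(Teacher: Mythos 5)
Your argument is correct and shares the paper's overall architecture: Lemma~\ref{lem.impreg3} with $\theta$ close to $1$ (indeed one needs $\theta>(N+p)/(2p)$, which is what your condition $1/2+N/(2p)<1$ amounts to) to get a uniform $L^\infty$-bound on $\nabla v$, then a uniform bound on $F=\varphi-v\gamma(v)$ in some $W^{2\xi,p}$ with $\xi>0$, and finally the representation formula \eqref{irb9} combined with the estimate $\|\tilde U(t,\tau)\|_{\mathcal{L}(W^{2\xi,p},W^{2,p})}\lesssim (t-\tau)^{\xi-1}e^{\omega_T(t-\tau)}$ from \cite[Lemma~II.5.1.3]{Aman1995}. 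Where you genuinely deviate is the treatment of the nonlocal term: the paper rewrites $u\gamma(v)=-\Delta\Gamma(v)+\Gamma(v)+f(v,\nabla v)-\Gamma(v)$, so that $\varphi=\Gamma(v)+(I-\Delta)^{-1}[f(v,\nabla v)-\Gamma(v)]$, bounds the second piece in $W^{2,p}$ by $L^p$-data elliptic regularity, and controls $\Gamma(v)-v\gamma(v)$ in $W^{2\xi,p}$ via the composition estimate of Brezis--Mironescu; you instead write $u\gamma(v)=f(v,\nabla v)-\mathrm{div}(\gamma(v)\nabla v)$ and split $\varphi=\varphi_1+\varphi_2$, using $W^{2,p}$ regularity for the $L^\infty$ datum $f$ and the $W^{1,p}$ elliptic estimate for divergence-form data for $\varphi_2$, which yields $F\in W^{2\alpha,p}$ uniformly for any $\alpha<1/2$. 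This buys you independence from the fractional-order composition result (in fact you do not even need your claim that $v\gamma(v)$ inherits the full $W^{2\theta_0,p}$-regularity: $v\gamma(v)\in W^{1,p}$ uniformly follows from the $C^1$-bound on $v$ and suffices), at the price of two points you should make explicit: the splitting matches $\varphi$ only because $\gamma(v)\nabla v\cdot\mathbf{n}=0$ on $\partial\Omega$, so the conormal/Neumann conditions are consistent and uniqueness applies; and the evolution-operator estimate into $W^{2,p}$ from $W^{2\alpha,p}$ should be invoked for $\alpha<(p+1)/(2p)$ (as in \eqref{irb12}), so that no boundary compatibility is imposed on $F$ --- harmless here since you only use $\alpha<1/2$. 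The restriction $\xi-1>-1$ making the time singularity integrable is the same in both proofs.
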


\begin{proof} We set $J=[0,T]$. Given $p\in (N,\infty)$, we may fix some $\theta\in ((N+p)/2p,1)$ and deduce from Lemma~\ref{lem.impreg3} that
	\begin{equation*}
		\|v(t)\|_{W^{2\theta,p}}\leq C(T,p)\,, \qquad t\in J\,.
	\end{equation*}
 Moreover, the choice of $\theta$ guarantees that $W^{2,p}(\Omega)$ is continuously embedded in $C^1(\bar{\Omega})$, so that the above estimate implies that
	\begin{equation}
	\|\nabla v(t)\|_\infty \le C(T,p)\,, \qquad t\in J\,. \label{irb14}
\end{equation}

Consider next $\xi\in (0,(p+1)/2p)$. It follows from \cite[Lemma~II.5.1.3]{Aman1995} that there is $L_{\xi,T}>0$ depending on $T$ such that
	\begin{equation}
	(t-\tau)^{1-\xi} \|\tilde{U}(t,\tau)\|_{\mathcal{L}(W^{2\xi,p}(\Omega),W^{2,p}(\Omega))} \le L_{\xi,T} e^{\omega_T(t-\tau)}\,, \qquad 0\le \tau<t\le T\,. \label{irb12}
	\end{equation} 
	We infer from \eqref{irb2}, \eqref{irb9}, and \eqref{irb12} that, for $t\in J$, 
	\begin{equation}
	\|v(t)\|_{W^{2,p}} \le M_T e^{\omega_T t} \|v^{in}\|_{W^{2,p}} + L_{\xi,T} \int_0^t (t-\tau)^{\xi-1} e^{\omega_T(t-\tau)} \|F(\tau)\|_{W^{2\xi,p}}\ \mathrm{d}\tau\,. \label{irb13}
	\end{equation}
	 Now, by direct calculations, we observe that
\begin{align*}
u \gamma(v) & = - \mathrm{div}(\gamma(v) \nabla v) + \gamma'(v) |\nabla v|^2 + v\gamma(v) \\
& = - \Delta \Gamma(v) + \Gamma(v) + v \gamma(v) - \Gamma(v) + \gamma'(v) |\nabla v|^2\\
& = - \Delta \Gamma(v) + \Gamma(v) +f(v,\nabla v) - \Gamma(v) \,. 
\end{align*}
Thus, we obtain that
\begin{equation}
\varphi=(I-\Delta)^{-1}[u\gamma(v)]=\Gamma(v)+(I-\Delta)^{-1}[f(v,\nabla v)-\Gamma(v)]\,. \label{nfvarphi}
\end{equation}
	
	Owing to the smoothing properties of $(I-\Delta)^{-1}$, we readily deduce from \eqref{g0}, \eqref{bv}, and \eqref{irb14} that, for $t\in J$, 
	\begin{align}
	\|(I-\Delta)^{-1}[f(v,\nabla v)-\Gamma(v)]\|_{W^{2\xi,p}} & \le \|(I-\Delta)^{-1} [f(v,\nabla v)-\Gamma(v)]\|_{W^{2,p}} \nonumber \\
	& \le C \|\gamma'(v)|\nabla v|^2+v\gamma(v)-\Gamma(v)\|_p \le C(T,p)\,. \label{irb15}
	\end{align}
	Also, the regularity of $\gamma$ ensures that the function $s\mapsto \Gamma(s)- s \gamma(s)$ obviously belongs to $W^{2,\infty}(D_T)$ (with $D_T=(v_*/2,2v^*(T))$) and we deduce from the continuous embedding of $W^{2\theta,p}(\Omega)$ in $W^{2\xi,p}(\Omega)$, Lemma~\ref{lem.impreg3}, and \cite[Theorem~2]{BrMi2001} that
	\begin{equation}
	\|\Gamma(v(t))-v(t) \gamma(v(t)) \|_{W^{2\xi,p}} \le C \left[ \|\Gamma(v(t))\|_{W^{2\theta,p}} + \|v\gamma(v)\|_{W^{2\theta,p}} \right] \le  C(T,p)\,, \qquad t\in J\,. \label{irb16}
	\end{equation}
 Since $F = \varphi - v \gamma(v)$ and $\varphi$ is also given by \eqref{nfvarphi}, we insert \eqref{irb15} and \eqref{irb16} in \eqref{irb13} to complete the proof, since $\xi-1>-1$. 
\end{proof}

\begin{proof}[Proof of Theorem~\ref{TH1}]
	With the aid of Proposition~\ref{prop.impreg4}, in the same way as done in \cite[Lemma~4.3]{AhnYoon2019}, we may further use a standard bootstrap argument to prove that, for any $0<T<T_{\mathrm{max}}$, there is $C=C(T)>0$ independent of $T_{\mathrm{max}}$ such that
\begin{equation*}
\sup\limits_{0\leq t\leq T}\|u(t)\|_{\infty}\leq C(T)\,.
\end{equation*}
According to Theorem~\ref{local}, we deduce that $T_{\mathrm{max}}= \infty$ and thus Theorem~\ref{TH1} is proved.
\end{proof}

\section{Uniform-in-time Boundedness of Classical Solutions to \eqref{cp}}\label{ubcs}

 The main outcome of Theorem~\ref{TH1} being the global existence of classical solutions to \eqref{cp} under the sole assumption \eqref{g0}, we are next interested in the uniform-in-time boundedness of these solutions under the assumptions of Theorem~\ref{TH2}.  We thus consider an initial condition $u^{in}$ satisfying \eqref{ini} and a motility function $\gamma$ satisfying \eqref{g1} and \eqref{gamma2} and first note that \eqref{g1} readily implies \eqref{g0} with $K_s = \gamma(s)$ for $s>0$. We then denote the unique global classical solution to \eqref{cp} provided by Theorem~\ref{TH1} by $(u,v)$ and recall that $v$ satisfies \eqref{e00} for all $(t,x)\in [0,\infty)\times \bar{\Omega}$. In this section, $C$, $(C_i)_{i\ge 0}$, and $(\lambda_i)_{i\ge 0}$ denote positive constants which depend only on $N$, $\Omega$, $\gamma$, $k$, $l$, $u^{in}$, $v_*$ and $a$. Dependence upon additional parameters will be indicated explicitly.

\subsection{Time-independent upper bounds}\label{sec3}

First, we derive a uniform-in-time upper bound  for $v$  by a Moser type iteration under the assumptions of Theorem~\ref{TH2}. More precisely, we prove the following result.

\begin{proposition}\label{propunib}
 There is $v^*>0$ depending only on $N$, $\Omega$, $\gamma$, $u^{in}$, $k$, $l$, and $a$ such that
\begin{equation}
	\sup\limits_{t\geq0} \|v(t)\|_{\infty}\leq v^*\,.
\end{equation}
\end{proposition}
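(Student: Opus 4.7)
The plan is to couple an improved comparison bound for the nonlocal term $\varphi = (I-\Delta)^{-1}[u\gamma(v)]$ with a Moser-type iteration. First I would establish the pointwise bound
\[
\varphi \le \Gamma(v) + a\gamma(a) \quad\text{in}\quad (0,\infty)\times\Omega
\]
by the elliptic comparison principle: setting $w = \Gamma(v) + a\gamma(a)$, a direct computation gives $-\Delta w + w - u\gamma(v) = -\gamma'(v)|\nabla v|^2 + \Gamma(v) + a\gamma(a) - v\gamma(v) \ge 0$, where the non-negativity comes from $\gamma' \le 0$ by \eqref{g1} together with $v\gamma(v) \le \Gamma(v) + a\gamma(a)$ furnished by Lemma~\ref{lemGam} (applicable since $v \ge v_* > a$), while the Neumann boundary conditions for $w$ and $\varphi$ match.

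Inserting this into \eqref{keyid} yields $\partial_t v - \gamma(v)\Delta v + v\gamma(v) \le \Gamma(v) + a\gamma(a)$. I would next test this inequality against $v^{q-1}/\gamma(v)$: introducing $\Psi_q(s) \triangleq \int_{v_*}^{s} \tau^{q-1}/\gamma(\tau)\,d\tau$, the weight cancels the diffusion coefficient, leaving the clean dissipation $(q-1)\int v^{q-2}|\nabla v|^2 + \int v^q$, which by Gagliardo-Nirenberg-Sobolev dominates $\|v\|_{qN/(N-2)}^q$ up to a power of $q$. Bounding the right-hand side via \eqref{gamma2} and Lemma~\ref{upG} by $C\int v^{q+k-l} + C\int v^{q+k-1}$ plus lower-order terms, I would interpolate each such quantity between the time-independent $L^{q_0}$-bound furnished by Corollary~\ref{cr3} (valid for $q_0 < N/(N-2)$) and $\int v^{qN/(N-2)}$, then apply Young's inequality to absorb these terms into a small fraction of the dissipation. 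A short computation shows that the absorption works precisely when some $q_0 < N/(N-2)$ can be chosen with $q_0 > N(k-l)/2$ and $q_0 > N(k-1)/2$; the existence of such $q_0$ is exactly equivalent to the twin hypotheses $k-l < 2/(N-2)$ and $k < N/(N-2)$ of Theorem~\ref{TH2}. A standard ODE comparison on $y(t) = \int \Psi_q(v(t))$, using the Hölder bound $y(t) \le C(1 + (\int v^{qN/(N-2)})^{(q+k)(N-2)/(qN)})$ to close the loop, then yields $\sup_t y(t) \le M_q$, which combined with the lower bound $\Psi_q(s) \ge c(s^{q+l} - s_1^{q+l})$ for $s$ large (coming from $\gamma(s)\le Cs^{-l}$) delivers a time-independent estimate on $\|v(t)\|_{q+l}$ for every sufficiently large $q$.

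Finally, starting from the time-independent $L^{p_0}$-bound obtained in the previous step with a large $p_0$, I would iterate the same testing scheme with $p_{j+1} = \rho p_j$ for some fixed $\rho > 1$, tracking carefully the polynomial dependence of the constants on $p_j$. The recursion for $\eta_j \triangleq \sup_{t\ge 0}\|v(t)\|_{p_j}^{p_j}$ will then fit the hypotheses of Lemma~\ref{lmiter}, which grants uniform boundedness of $\eta_j^{1/p_j}$; letting $p_j \to \infty$ yields the desired $L^\infty$-bound. The main obstacle is the absorption step in the second paragraph: even after invoking the improved comparison estimate, the source contribution is of order $v^{q+k-l}$, which exceeds the natural dissipation scale $v^{qN/(N-2)}$ whenever $k > l$; only by interpolating against the a priori $L^{q_0}$-norm from Corollary~\ref{cr3} and balancing the resulting Young exponent against $(N-2)/N$ can one recover absorption, and this balance is precisely what forces the sharp thresholds $k < N/(N-2)$ and $k-l < 2/(N-2)$.
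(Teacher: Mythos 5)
Your proposal is correct and follows essentially the same route as the paper: the comparison bound $\varphi\le\Gamma(v)+a\gamma(a)$ is exactly Lemma~\ref{lem01} (proved the same way via Lemma~\ref{lemGam} and the elliptic comparison principle), the interpolation of the sublinear source against the mass-based bound of Corollary~\ref{cr3} yields time-independent $L^p$-estimates under precisely the thresholds $k<N/(N-2)$ and $k-l<2/(N-2)$ as in Lemmas~\ref{lmp}--\ref{lmvp}, and the conclusion is the same Alikakos--Moser iteration closed by Lemma~\ref{lmiter}. The only difference is cosmetic: you test with $v^{q-1}/\gamma(v)$ and track $\int\Psi_q(v)\,\mathrm{d}x$, whereas the paper multiplies by $v^{p-1}$ and uses $1/\gamma(s)\le C(1+s^k)$ together with $u=v-\Delta v$, which amounts to the same estimates after the reindexing $q=p-k$.
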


 The proof of Proposition~\ref{propunib} consists of the following lemmas.

\begin{lemma}\label{lem01}
 The function $v$ satisfies the following differential inequality
\begin{equation}
0 \le \partial_t v + u \gamma(v) \le \Gamma(v)+ a\gamma(a) \;\;\text{ in }\;\; (0,\infty)\times \Omega. \label{e1a}
\end{equation}
\end{lemma}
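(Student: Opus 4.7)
The starting point is the key identity from Lemma~\ref{tdub},
\begin{equation*}
\partial_t v + u\gamma(v) = (I-\Delta)^{-1}[u\gamma(v)] \quad\text{in } (0,\infty)\times\Omega,
\end{equation*}
so the task reduces to bounding the nonlocal term $w \triangleq (I-\Delta)^{-1}[u\gamma(v)]$ above and below pointwise. The lower bound $\partial_t v + u\gamma(v) \ge 0$ is immediate: since $u\ge 0$ and $\gamma>0$, the datum $u\gamma(v)$ is non-negative, so the elliptic maximum principle (with homogeneous Neumann boundary condition) yields $w\ge 0$.

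For the upper bound, I plan to introduce the comparison function
\begin{equation*}
\Psi \triangleq \Gamma(v) + a\gamma(a) - w
\end{equation*}
and show $\Psi\ge 0$ via the elliptic maximum principle, which will give exactly $w \le \Gamma(v)+a\gamma(a)$. To compute $-\Delta\Psi+\Psi$, I will use the chain rule $\Delta\Gamma(v) = \gamma(v)\Delta v + \gamma'(v)|\nabla v|^2$, the definition of $w$ which gives $w-\Delta w = u\gamma(v)$, and the elliptic equation \eqref{cp2} rewritten as $u\gamma(v) = v\gamma(v) - \gamma(v)\Delta v$. Substituting these, the Laplacian-of-$v$ terms cancel and I obtain
\begin{equation*}
\Psi - \Delta\Psi = \bigl[\Gamma(v) + a\gamma(a) - v\gamma(v)\bigr] - \gamma'(v)|\nabla v|^2.
\end{equation*}

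The two terms on the right are individually non-negative: the bracket is non-negative by Lemma~\ref{lemGam} (applicable since $v\ge v_* > a$ everywhere, by \eqref{e00} and the choice $a\in(0,v_*)$), and the second term is non-negative because $\gamma'\le 0$ by assumption \eqref{g1}. On the boundary, $\nabla\Psi\cdot\mathbf{n} = \gamma(v)\nabla v\cdot\mathbf{n} - \nabla w\cdot\mathbf{n} = 0$ since both $v$ and $w$ satisfy homogeneous Neumann conditions. Thus $\Psi$ is a non-negative classical supersolution to $-\Delta z + z = 0$ with Neumann data zero, and the weak maximum principle (together with Hopf's lemma to exclude a negative boundary minimum) gives $\Psi\ge 0$, completing the proof.

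I do not anticipate any real obstacle: the argument is a direct comparison computation. The only point requiring care is verifying $v\ge a$ pointwise so that Lemma~\ref{lemGam} applies; this is guaranteed by the choice $a\in(0,v_*)$ together with the uniform lower bound \eqref{e00}. Everything else is a routine application of the chain rule, the identity \eqref{cp2}, and the elliptic maximum principle.
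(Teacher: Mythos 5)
Your proposal is correct and follows essentially the same route as the paper: the lower bound via non-negativity of the datum, and the upper bound via the identity $u\gamma(v)=-\Delta\Gamma(v)+\Gamma(v)+\gamma'(v)|\nabla v|^2+v\gamma(v)-\Gamma(v)$ combined with Lemma~\ref{lemGam}, $\gamma'\le 0$, and the Neumann comparison/maximum principle. Writing the argument for the difference $\Psi=\Gamma(v)+a\gamma(a)-\varphi$ is just the comparison principle the paper invokes, spelled out explicitly.
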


We mention here that the proof of Lemma~\ref{lem01} only uses the assumption \eqref{g1} on $\gamma$.

\begin{proof} First, in view of the key identity \eqref{keyid},  we readily deduce from the non-negativity of $u$ and $\gamma$, the boundary conditions \eqref{cp3}, and the comparison principle that $\partial_t v +u\gamma(v)\ge 0$ in $(0,\infty)\times\Omega$.
	
 Next, recalling that $\varphi=(I-\Delta)^{-1}[u\gamma(v)]$, see \eqref{defvarphi}, we infer from \eqref{cp2} that
\begin{align*}
	\varphi-\Delta\varphi &= u\gamma(v) =(v-\Delta v)\gamma(v)\\
	&=-\Delta \Gamma(v)+\Gamma(v)+\gamma'(v)|\nabla v|^2+v\gamma(v)-\Gamma(v) \;\;\text{ in }\;\; (0,\infty)\times \Omega.
\end{align*}
In view of \eqref{g1}, \eqref{cp3}, and Lemma~\ref{lemGam}, we obtain that
\begin{equation*}
\varphi-\Delta \varphi\leq \Gamma(v)+ a\gamma(a) -\Delta (\Gamma(v)+a\gamma(a))\;\;\text{ in }\;\; (0,\infty)\times \Omega,
\end{equation*}
and $\nabla \varphi\cdot \mathbf{n} = \nabla (\Gamma(v) + a\gamma(a))\cdot \mathbf{n}=0$ on $(0,\infty)\times\partial\Omega$, which, according to the comparison principle, yields that
\begin{equation*}
	\varphi\leq \Gamma(v)+a\gamma(a) \;\;\text{ in }\;\; (0,\infty)\times \Omega.
\end{equation*}
 Combining the above inequality with the key identity \eqref{keyid} completes the proof.
\end{proof}

\begin{lemma}\label{lmp} There are positive constants $\lambda_0>0$ and $C_0>0$ such that, for any $p>1+k$,
	\begin{equation*}
	\frac{d}{dt}\|v\|_p^{p}+\frac{\lambda_0 p(p-k-1)}{(p-k)^2} \|\nabla v^{\frac{p-k}{2}}\|_2^2+\lambda_0 p\| v\|_{p-k}^{p-k}\leq  C_0 p\int_\Omega\left( v^{p-1}\Gamma(v)+ v^{p-1}\right) \mathrm{d} x\,.
	\end{equation*}
\end{lemma}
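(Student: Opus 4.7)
The plan is to test the right-hand inequality in \eqref{e1a}, namely $\partial_t v + u\gamma(v)\le \Gamma(v) + a\gamma(a)$, against the non-negative function $pv^{p-1}$ and integrate over $\Omega$. This immediately yields
\begin{equation*}
\frac{d}{dt}\|v\|_p^p + p\int_\Omega v^{p-1}u\gamma(v)\,\mathrm{d}x \le p\int_\Omega v^{p-1}\Gamma(v)\,\mathrm{d}x + pa\gamma(a)\int_\Omega v^{p-1}\,\mathrm{d}x,
\end{equation*}
whose right-hand side is already of the claimed form after setting $C_0 \triangleq \max\{1,a\gamma(a)\}$. The entire task then reduces to bounding $p\int v^{p-1}u\gamma(v)\,\mathrm{d}x$ from below by $\frac{\lambda_0 p(p-k-1)}{(p-k)^2}\|\nabla v^{(p-k)/2}\|_2^2+\lambda_0 p\|v\|_{p-k}^{p-k}$.

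The central idea, which keeps the argument elementary and sidesteps the troublesome $\gamma'(v)|\nabla v|^2$ term that would arise from integrating $v^{p-1}\gamma(v)\Delta v$ by parts directly, is to replace $\gamma(v)$ from below by a pure power $\lambda v^{-k}$ \emph{before} integrating by parts. Such a lower bound $\gamma(s)\ge \lambda s^{-k}$ holds on the whole range $[v_*,\infty)$ by combining the $\liminf$ condition in \eqref{gamma2}, which provides it for $s\ge s_0$ with some $s_0$ sufficiently large, with the positivity and monotonicity \eqref{g1} of $\gamma$ on the compact interval $[v_*,s_0]$. Once this is in hand, $p\int_\Omega v^{p-1}u\gamma(v)\,\mathrm{d}x \ge p\lambda\int_\Omega v^{p-k-1}u\,\mathrm{d}x$, and inserting $u=v-\Delta v$ from \eqref{cp2}, integrating by parts (the boundary term vanishing thanks to \eqref{cp3}), and using $\nabla v^{p-k-1}=(p-k-1)v^{p-k-2}\nabla v$ produces the identity
\begin{equation*}
p\lambda\int_\Omega v^{p-k-1}u\,\mathrm{d}x = p\lambda\|v\|_{p-k}^{p-k} + \frac{4p\lambda(p-k-1)}{(p-k)^2}\|\nabla v^{(p-k)/2}\|_2^2.
\end{equation*}
Since $p>1+k$ forces $p-k-1>0$, both terms carry the correct sign, and the choice $\lambda_0 \triangleq \lambda$ (the harmless factor $4$ on the gradient coefficient is simply absorbed, making the stated bound slightly looser than what we actually prove) completes the proof.

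The only genuinely non-routine point, and thus the main obstacle, is deriving the uniform lower bound $\gamma(s)\ge \lambda s^{-k}$ on the whole of $[v_*,\infty)$: assumption \eqref{gamma2} only supplies this asymptotic behaviour as $s\to\infty$, so one must glue it to the compact range $[v_*,s_0]$ by exploiting the positivity and monotonicity of $\gamma$ there in order to exhibit a single constant $\lambda>0$ valid throughout. Everything else is a single power-law integration by parts exploiting the Neumann boundary condition on $v$.
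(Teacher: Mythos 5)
Your proposal is correct and follows essentially the same route as the paper: test the differential inequality \eqref{e1a} against $pv^{p-1}$, bound $\gamma(v)$ from below by a multiple of $v^{-k}$ using \eqref{gamma2} together with monotonicity (the paper phrases this as $1/\gamma(s)\le C(1+s^k)$ on $(0,\infty)$ combined with $v\ge v_*$, which is equivalent to your direct bound $\gamma(s)\ge\lambda s^{-k}$ on $[v_*,\infty)$), and then substitute $u=v-\Delta v$ and integrate by parts to produce the $\|v\|_{p-k}^{p-k}$ and $\|\nabla v^{(p-k)/2}\|_2^2$ terms. The step you flag as the main obstacle is handled in the paper in exactly the way you describe, so there is no gap.
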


\begin{proof}
First, as in the proof of \cite[Lemma~3.3]{FuJi2020b}, assumption~\eqref{gamma2} guarantees that there exist  $b>0$ and $s_b>v_*$ depending only on $\gamma$ such that, for all $s\geq s_b$,
\begin{equation*}
1/\gamma(s)\leq bs^k\,,
\end{equation*}
while the monotonicity of $\gamma$ ensures that
\begin{equation*}
1/\gamma(s)\leq 1/\gamma(s_b)
\end{equation*}
for all $0 < s<s_b$. Therefore, 
\begin{equation}\label{cond_gamma}
1/\gamma(s)\leq bs^{k}+1/\gamma(s_b) \le C (1+s^k)\,, \qquad s > 0\,.
\end{equation}
Now, multiplying the inequality \eqref{e1a} by $v^{p-1}$ for some $p>1+k$, we obtain that
\begin{equation*}
\frac{1}{p}\frac{d}{dt}\|v\|_p^p+\int_\Omega u\gamma(v)v^{p-1}\ \mathrm{d} x \leq\int_\Omega v^{p-1}\Gamma(v)\ \mathrm{d} x + C \|v\|_{p-1}^{p-1}.
\end{equation*}	
Thanks to \eqref{g1}, \eqref{e00}, and \eqref{cond_gamma}, it follows that
\begin{align}
\int_\Omega u\gamma(v)v^{p-1}\ \mathrm{d} x & \geq C\int_\Omega (v^{k}+1)^{-1}v^{p-1}u\ \mathrm{d} x \ge C\int_\Omega (v^{k}+ v_*^{-k} v^k)^{-1}v^{p-1}u\ \mathrm{d} x \nonumber \\
& \ge C\int_\Omega v^{p-k-1}u\ \mathrm{d} x. \label{ile}
\end{align}
Next, recalling that $v-\Delta v=u$ by \eqref{cp2}, we observe that
\begin{align*}
\int_\Omega v^{p-k-1}u\ \mathrm{d} x & = \int_\Omega v^{p-k-1}(v-\Delta v)\ \mathrm{d} x = \|v\|_{p-k}^{p-k} + (p-k-1) \int_\Omega v^{p-k-2} |\nabla v|^2\ \mathrm{d} x \\
& =  \|v\|_{p-k}^{p-k} + \frac{4(p-k-1)}{(p-k)^2} \|\nabla v^{\frac{p-k}{2}}\|_2^2.
\end{align*}
Collecting the above estimates, we arrive at
\begin{equation*}
\frac{d}{dt}\|v\|_p^{p}+\frac{\lambda_0 p(p-k-1)}{(p-k)^2} \|\nabla v^{\frac{p-k}{2}}\|_2^2 +\lambda_0 p \|v\|_{p-k}^{p-k}\leq  C p\int_\Omega\left( v^{p-1}\Gamma(v)+ v^{p-1}\right)\ \mathrm{d} x,
\end{equation*}
and the proof is complete.
\end{proof}

\begin{lemma}\label{lmrhs} 
 For any 
\begin{equation*}
p> \max\left\{\frac{2(N-1)}{N-2},l+\frac{N(k-l)}{2}\right\}
\end{equation*} 
and $\varepsilon>0$, there holds
\begin{equation}\label{rhs}
	\int_\Omega \left(v^{p-1}\Gamma(v)+ v^{p-1}\right)\mathrm{d} x\leq \varepsilon\|v^{\frac{p-k}{2}}\|_{H^1}^2 + C_1(\varepsilon,p)\,, \qquad t\ge 0\,.
\end{equation}
\end{lemma}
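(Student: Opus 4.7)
The plan is to reduce $v^{p-1}\Gamma(v)+v^{p-1}$ to a pure power $v^q+1$ via Lemma~\ref{upG}, then apply a Gagliardo--Nirenberg interpolation between the gradient norm of $w:=v^{(p-k)/2}$ and a uniform low $L^\sigma$-bound on $v$ supplied by Corollary~\ref{cr3}, and finally absorb the result by Young's inequality.

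\textbf{Step 1 (power reduction).} Since $v\geq v_*>a$, Lemma~\ref{upG} gives $\Gamma(v)\leq Cv^{1-l}$ when $l<1$, $\Gamma(v)\leq C$ when $l>1$, and $\Gamma(v)\leq C\log(v/a)\leq C_\eta v^\eta$ for any $\eta>0$ when $l=1$. Together with the monotonicity of $v\mapsto v^{p-1}$ and the fact that $v$ is bounded away from zero, these yield pointwise
\begin{equation*}
v^{p-1}\Gamma(v)+v^{p-1}\leq C(v^q+1),
\end{equation*}
where $q=p-l$ if $l<1$, $q=p-1$ if $l>1$, and $q=p-1+\eta$ if $l=1$. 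In every branch $q-p+k\geq 0$, and moreover the constraints $k<N/(N-2)$ and $k-l<2/(N-2)$ from Theorem~\ref{TH2} give $q-p+k<2/(N-2)$, the case $l=1$ requiring $\eta$ to be chosen sufficiently small.

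\textbf{Step 2 (interpolation).} Set $w=v^{(p-k)/2}$ and $r=2q/(p-k)$, so that $\int_\Omega v^q\,\mathrm{d}x=\|w\|_r^r$. Choose $\sigma\in(\max\{1,N(q-p+k)/2\},\,N/(N-2))$; this interval is nonempty by Step~1. Corollary~\ref{cr3} yields $\|w\|_m\leq C$ with $m=2\sigma/(p-k)$. The threshold $p>\max\{2(N-1)/(N-2),\,l+N(k-l)/2\}$ is tailored so that $r\leq 2N/(N-2)$ in the $q=p-l$ branch (which forces $p>l+N(k-l)/2$) and in the $q=p-1$ branch (the supremum of $(Nk-(N-2))/2$ over $k\in(0,N/(N-2))$ is exactly $2(N-1)/(N-2)$). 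The Gagliardo--Nirenberg inequality therefore gives
\begin{equation*}
\|w\|_r^r\leq C\|w\|_{H^1}^{r\theta}\|w\|_m^{r(1-\theta)}\leq C\|w\|_{H^1}^{r\theta},\qquad \theta=\frac{1/m-1/r}{1/m+1/N-1/2}.
\end{equation*}

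\textbf{Step 3 (absorption).} A direct calculation shows that the exponent condition $r\theta<2$ is equivalent to $\sigma>N(q-p+k)/2$, precisely the lower bound built into the choice of $\sigma$ in Step~2. Young's inequality then converts the previous display into
\begin{equation*}
\|w\|_r^r\leq \varepsilon\|w\|_{H^1}^2+C_1(\varepsilon,p),
\end{equation*}
which combined with Step~1 proves \eqref{rhs}. The main obstacle is the case $l\geq 1$, in which the dominant exponent $q=p-1$ is larger than the naive $p-l$, and hence requires the auxiliary threshold $p>2(N-1)/(N-2)$ in the hypothesis to uniformly secure $r\leq 2N/(N-2)$; the logarithmic case $l=1$ is absorbed into this framework through the strict inequality $k-l<2/(N-2)$, which leaves room for the perturbation $\eta$.
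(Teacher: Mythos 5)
Your proof is correct and follows essentially the same strategy as the paper: both reduce the integrand to a single power $v^q$ via Lemma~\ref{upG}, interpolate $\int_\Omega v^q\,\mathrm{d}x$ between the Sobolev-controlled quantity $\|v^{(p-k)/2}\|_{H^1}$ and the uniform low-order bound supplied by Corollary~\ref{cr3}, and then absorb by Young's inequality. Your unified parametrization by $q$ and $\sigma$ consolidates the paper's three case-by-case H\"older/Sobolev computations into one (the exponent condition $r\theta<2$ being identical to the paper's sign checks), with the incidental advantage that it consistently targets $\|v^{(p-k)/2}\|_{H^1}$ in the $l=1$ branch, whereas the paper's displayed estimate there produces $\|v^{(p-1)/2}\|_{H^1}$, which coincides with the needed quantity only when $k=1$.
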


\begin{proof} Recall that $N\ge 3$, $0\leq k<\frac{N}{N-2}$, and $0\leq k-l<\frac{2}{N-2}$. Observing that the choice of $p$ and $k$ guarantees that
\begin{equation*}
p\ge \frac{2(N-1)}{N-2} >\max\left\{1+\frac{N(k-1)_+}{2},1+k\right\},
\end{equation*} 
we split the argument into three cases. 
	
\noindent	Case~(I):  $1<l<\frac{N}{N-2}$. Thanks to Lemma~\ref{upG} and \eqref{e00},
\begin{equation*}
	\Gamma(v)\leq \frac{C(v^{1-l}-a^{1-l})}{1-l}=\frac{C}{l-1}(a^{1-l}-v^{1-l})\leq C,
\end{equation*}
so that
\begin{equation}
\int_\Omega \left(v^{p-1}\Gamma(v)+ v^{p-1}\right)\mathrm{d} x \le C \|v\|_{p-1}^{p-1}\,. \label{zz2}
\end{equation}
Now, since $k<\frac{N}{N-2}$, there is $q\in \left( 1 , \frac{N}{N-2} \right)$ such that
\begin{equation*}
q>\frac{(k-1)N}{2}\,. 
\end{equation*}
Since $q < \frac{N}{N-2} \le p-1< \frac{(p-k)N}{N-2}$ according to the choice of $p$ and $q$  (recall that $k\ge l>1$ here), it follows from H\"older's inequality and Corollary~\ref{cr3} that
\begin{equation*}
\|v\|_{p-1}\leq \|v\|^\alpha_{\frac{(p-k)N}{N-2}}\|v\|_{q}^{1-\alpha}\leq C(p) \|v\|^\alpha_{\frac{(p-k)N}{N-2}}
\end{equation*}
with 
\begin{equation*}
\alpha=\frac{N(p-k)(p-1-q)}{(p-1)[N(p-k)-q(N-2)]}\in (0,1).
\end{equation*}
Consequently,  recalling that $N\ge 3$, we deduce from Sobolev's inequality that
\begin{equation*}
\|v\|_{p-1}^{p-1}\leq C(p) \|v\|_{\frac{(p-k)N}{N-2}}^{(p-1)\alpha} = C(p) \|v^{\frac{p-k}{2}}\|_{\frac{2N}{N-2}}^{\frac{2(p-1)\alpha}{p-k}} \le C(p) \|v^{\frac{p-k}{2}}\|_{H^1}^{\frac{2(p-1)\alpha}{p-k}},
\end{equation*}
and one easily checks that the choice of $q$ implies that
\begin{equation*}
\frac{2(p-1)\alpha}{p-k}- 2 = \frac{2[(k-1)N-2q]}{N(p-k)-q(N-2)} < 0\,.
\end{equation*} 
Then \eqref{rhs} follows  from \eqref{zz2} and the above estimates by Young's inequality.

\smallskip

\noindent Case~(II): $l=1$. Fix $q\in \left( 1 , \frac{N}{N-2} \right)$ and $\delta\in (0,1)$ such that
\begin{equation*}
\delta < \min\left\{ \frac{2(p-1)}{N-2} , \frac{2q}{N} \right\}\,.
\end{equation*}
By Lemma~\ref{upG}, \eqref{e00}, and Young's inequality,
	\begin{equation*}
	\int_\Omega \left( v^{p-1}\Gamma(v) + v^{p-1} \right)\ \mathrm{d} x \leq C \int_\Omega v^{p-1} (1+\log v)\ \mathrm{d} x \leq C(p) \|v\|_{p-1+\delta}^{p-1+\delta} + C(p).	
	\end{equation*}
	
Since the choice of $p$ and $\delta$ ensures that $q < \frac{N}{N-2} < p-1+\delta<\frac{(p-1)N}{N-2}$, we infer from H\"older's inequality and Corollary~\ref{cr3} that
	\begin{equation*}
	\|v\|_{p-1+\delta}\leq \|v\|^\alpha_{\frac{(p-1)N}{N-2}}\|v\|_{q}^{1-\alpha}\leq C(p) \|v\|^\alpha_{\frac{(p-1)N}{N-2}}
	\end{equation*}
	with 
	\begin{equation*}
	\alpha=\frac{N(p-1)(p-1+\delta-q)}{(p-1+\delta)[N(p-1)-q(N-2)]}.
	\end{equation*}
Therefore, by Sobolev's inequality,
	\begin{equation*}
	\|v\|_{p-1+\delta}^{p-1+\delta} \leq C(p) \|v\|_{\frac{(p-1)N}{N-2}}^{(p-1+\delta)\alpha} = C(p) \|v^{\frac{p-1}{2}}\|_{\frac{2N}{N-2}}^{\frac{2(p-1+\delta)\alpha}{p-1}} \le C(p) \|v^{\frac{p-1}{2}}\|_{H^1}^{\frac{2(p-1+\delta)\alpha}{p-1}}.
	\end{equation*}
Owing to the choice of $q$ and $\delta$, we realize that
	\begin{equation*}
	\frac{2(p-1+\delta)\alpha}{p-1}- 2 = \frac{2(\delta N - 2q)}{N(p-1)-q(N-2)}<0\,,
	\end{equation*}
and \eqref{rhs} follows  the above estimates by Young's inequality.

\smallskip
	
\noindent	Case~(III): $0\leq l<1$. In this case, by \eqref{e00} and Lemma~\ref{upG},
\begin{equation*}
	\int_\Omega \left( v^{p-1}\Gamma(v) + v^{p-1} \right) \mathrm{d} x\leq \int_\Omega \left( C v^{p-l} + v_*^{l-1} v^{p-l} \right) \mathrm{d} x \le C \|v\|_{p-l}^{p-l}.
	\end{equation*}
Since $0\leq k-l<\frac{2}{N-2}$, we can pick $q\in \left( 1 , \frac{N}{N-2} \right)$ such that $q>\frac{N(k-l)}{2}$ and observe that the choice of $p$ ensures that 
\begin{equation*}
q < \frac{N}{N-2} \le p-1 < p-l <\frac{(p-k)N}{N-2}.
\end{equation*}	
We infer from H\"older's inequality and Corollary~\ref{cr3} that
	\begin{equation*}
	\|v\|_{p-l}\leq \|v\|^\alpha_{\frac{(p-k)N}{N-2}}\|v\|_{q}^{1-\alpha}\leq C(p) \|v\|^\alpha_{\frac{(p-k)N}{N-2}}
	\end{equation*}
	with 
	\begin{equation*}
	\alpha=\frac{N(p-k)(p-l-q)}{(p-l)[N(p-k)-q(N-2)]}\,.
	\end{equation*}
Thus, we deduce from Sobolev's inequality that
	\begin{equation*}
	\|v\|_{p-l}^{p-l}\leq C(p) \|v\|_{\frac{(p-k)N}{N-2}}^{(p-l)\alpha} = C(p) \|v^{\frac{p-k}{2}}\|_{\frac{2N}{N-2}}^{\frac{2(p-l)\alpha}{p-k}} = C(p) \|v^{\frac{p-k}{2}}\|_{H^1}^{\frac{2(p-l)\alpha}{p-k}}.
	\end{equation*}	
Due to the choice of $q$ and $p$, we find that
\begin{equation}
	\frac{2(p-l)\alpha}{p-k}-2=\frac{2[(k-l)N-2q]}{(p-k)N-(N-2)q}<0\,.
\end{equation}
 Once more, \eqref{rhs} follows from the above estimates by Young's inequality and the proof is complete.
\end{proof}

 We are now in a position to derive uniform-in-time estimates for $v$ in $L^p(\Omega)$ for any large enough finite value of $p$.

\begin{lemma}\label{lmvp}
For any $p \geq 1$, there is $C_2(p)>0$ such that
	\begin{equation*}
		\|v(t)\|_p\leq C_2(p)\,, \qquad t\geq 0\,.
	\end{equation*}
\end{lemma}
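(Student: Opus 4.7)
The plan is to derive, for all sufficiently large exponents $p$, an autonomous differential inequality for $y(t)\triangleq\|v(t)\|_p^p$ of sublinear form $y' + c(p)\, y^{(p-k)/p}\le C(p)$, whose solution is uniformly bounded in time by elementary ODE comparison since $(p-k)/p\in(0,1)$. The bound for smaller values of $p$ will then follow from H\"older's inequality on the bounded domain $\Omega$.

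First I would fix a threshold $p^\star \ge \max\{1+k,\,2(N-1)/(N-2),\,l+N(k-l)/2,\,kN/2\}$ so that both Lemma~\ref{lmp} and Lemma~\ref{lmrhs} apply for any $p\ge p^\star$ and, moreover, $(p-k)N/(N-2)\ge p$. For such $p$, I combine Lemma~\ref{lmp} with Lemma~\ref{lmrhs}, choosing $\varepsilon$ small enough (depending on $p$) so that the term $C_0 p \varepsilon \|v^{(p-k)/2}\|_{H^1}^2$ from the right can be absorbed into the two dissipative contributions $\|\nabla v^{(p-k)/2}\|_2^2$ and $\|v\|_{p-k}^{p-k}$ on the left, giving
\begin{equation*}
\frac{d}{dt}\|v\|_p^p + c_1(p)\bigl(\|\nabla v^{(p-k)/2}\|_2^2 + \|v\|_{p-k}^{p-k}\bigr) \le C_2(p)\,.
\end{equation*}
The Sobolev embedding $H^1(\Omega)\hookrightarrow L^{2N/(N-2)}(\Omega)$, valid since $N\ge 3$, then gives $\|v^{(p-k)/2}\|_{H^1}^2 \ge c_S\,\|v\|_{(p-k)N/(N-2)}^{p-k}$, while H\"older's inequality on $\Omega$ (legitimate because $p\le(p-k)N/(N-2)$ when $p\ge kN/2$) gives $\|v\|_{(p-k)N/(N-2)}\ge c_\Omega(p)\,\|v\|_p$. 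Combining these estimates yields
\begin{equation*}
y'(t)+c_3(p)\, y(t)^{(p-k)/p}\le C_2(p)\,, \qquad t\ge 0\,,
\end{equation*}
and since $(p-k)/p\in(0,1)$, a standard ODE comparison argument produces $y(t)\le\max\{y(0),(C_2(p)/c_3(p))^{p/(p-k)}\}$, which is the desired uniform $L^p$ bound for all $p\ge p^\star$.

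For $1\le p<p^\star$, H\"older's inequality on $\Omega$ gives $\|v(t)\|_p\le|\Omega|^{1/p-1/p^\star}\|v(t)\|_{p^\star}$, so the bound extends to all $p\ge 1$. The main obstacle, in my view, is the first absorption step: one has to check that the constant $\varepsilon$ provided by Lemma~\ref{lmrhs} (whose proof splits into the three regimes $l>1$, $l=1$, $l<1$) can be chosen so that $C_0 p \varepsilon$ is smaller than both $\tfrac{1}{2}\lambda_0 p(p-k-1)/(p-k)^2$ and $\tfrac{1}{2}\lambda_0 p$, the two dissipative coefficients of Lemma~\ref{lmp}. The structural assumptions $k<N/(N-2)$ and $k-l<2/(N-2)$ have already been exploited in Lemma~\ref{lmrhs}, so once this absorption is secured, the passage via Sobolev and H\"older to the sublinear ODE inequality is routine.
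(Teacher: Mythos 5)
Your proposal is correct and follows essentially the same route as the paper: combine Lemma~\ref{lmp} with Lemma~\ref{lmrhs}, absorb the $\varepsilon\|v^{(p-k)/2}\|_{H^1}^2$ term into the dissipation (the ``obstacle'' you flag is a non-issue, since Lemma~\ref{lmrhs} holds for \emph{every} $\varepsilon>0$ with $C_1(\varepsilon,p)$ adjusted, exactly as the paper exploits), derive a sublinear differential inequality for $\|v\|_p^p$ for large $p$, and extend to small $p$ via the embedding on the bounded domain. The only cosmetic difference is the final interpolation: you use Sobolev plus H\"older (valid since $p\ge kN/2$ gives $p\le (p-k)N/(N-2)$) to obtain the exponent $(p-k)/p$, whereas the paper uses the lower bound $v\ge v_*$ to write $\|v\|_p^p\le C(p)\|v^{(p-k)/2}\|_{H^1}^{2N/(N-2)}$ and obtain the exponent $(N-2)/N$; both yield a uniformly bounded solution of the resulting ODE inequality (and when $k=0$ your exponent is $1$ rather than in $(0,1)$, which still gives the bound).
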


\begin{proof} 
We first observe that, since $\Omega$ is bounded, we may assume without loss of generality that
\begin{equation}
p> \max\left\{ 2+k, \frac{2(N-1)}{N-2},\frac{Nk}{2} \right\}\,. \label{plow}
\end{equation}	
Next, recalling Lemma~\ref{lmp}, we have
\begin{equation*}
\frac{d}{dt}\|v\|_p^{p}+\frac{\lambda_0 p(p-k-1)}{(p-k)^2} \|\nabla v^{\frac{p-k}{2}}\|_2^2+\lambda_0 p\|v\|_{p-k}^{p-k}\leq C_0 p\int_\Omega\left( v^{p-1}\Gamma(v)+ v^{p-1}\right)\ \mathrm{d} x.
\end{equation*}
Observing that $\|v\|_{p-k}^{p-k} = \|v^{\frac{p-k}{2}}\|_2^2$ and that, for $p\geq 2+k$,
\begin{equation*}
\frac{\lambda_0 p(p-k-1)}{(p-k)^2}>\lambda_0 \left( 1-\frac{1}{p-k} \right) \geq \frac{\lambda_0}{2},
\end{equation*} 
we deduce from the above inequality that
\begin{equation*}
\frac{d}{dt}\|v\|_p^p +\frac{\lambda_0}{2} \|v^{\frac{p-k}{2}}\|_{H^1}^2 \leq C_0 p\int_\Omega\left( v^{p-1}\Gamma(v)+ v^{p-1}\right) \mathrm{d} x.
\end{equation*}
 We now point out that $p>\frac{Nk}{2} \ge  l+\frac{(k-l)N}{2}$ due to $N\geq3$, $l\ge 0$, and the choice \eqref{plow} of $p$, so that we may apply Lemma~\ref{lmrhs} with an appropriate choice of $\varepsilon$, thereby obtaining
\begin{equation*}
\frac{d}{dt} \|v\|_p^p +\frac{\lambda_0 }{4} \|v^{\frac{p-k}{2}}\|_{H^1}^2 \leq C(p).
\end{equation*}
Since $p>\frac{Nk}{2}$, we infer from \eqref{e00} and Sobolev's inequality that
\begin{align*}
\|v\|_p^p & = \int_\Omega v^{\frac{N(p-k)}{N-2}} v^{\frac{kN-2p}{N-2}} \mathrm{d} x \le v_*^{\frac{kN-2p}{N-2}} \|v^{\frac{p-k}{2}}\|_{\frac{2N}{N-2}}^{\frac{2N}{N-2}} \le C(p) \|v^{\frac{p-k}{2}}\|_{H^1}^{\frac{2N}{N-2}} 
\end{align*}
and we finally arrive at
\begin{align*}
\frac{d}{dt}\|v\|_p^p +\lambda_1(p) \left( \|v\|_p^p \right)^{(N-2)/N}\leq C(p).
\end{align*}
Lemma~\ref{lmvp} then readily follows from the above differential inequality by the comparison principle for $p$ satisfying \eqref{plow}. We finally use the continuous embedding of $L^p(\Omega)$ in $L^q(\Omega)$ for $p\ge q$ to complete the proof.
\end{proof}

As in \cite{FuJi2020c, Jiang2020}, we next use Moser's iteration technique along the lines of \cite{Alik1979} to establish the boundedness of $v$ in  $L^\infty$ as stated in Proposition~\ref{propunib}.

\begin{proof}[Proof of Proposition~\ref{propunib}]
We recall that \eqref{e00}, \eqref{keyid}, the comparison principle, and the monotonicity of $\gamma$ imply that
\begin{equation*}
\partial_t v + u \gamma(v) \le \gamma(v_*) v \;\;\text{ in }\;\; (0,\infty)\times \Omega.
\end{equation*}
Then, for any $p \geq  2+k$, 
\begin{equation*}
\frac{d}{dt} \|v\|_p^p + p \int_\Omega u \gamma(v) v^{p-1} \mathrm{d} x \le p \gamma(v_*) \|v\|_p^p.
\end{equation*}
We also infer from \eqref{cp2}, \eqref{cp3}, and \eqref{ile} that 
\begin{align*}
\int_\Omega u \gamma(v) v^{p-1} \mathrm{d} x & \ge C \int_\Omega u v^{p-k-1} \mathrm{d} x = C \int_\Omega (v-\Delta v) v^{p-k-1} \mathrm{d} x \\
& = C \|v\|_{p-k}^{p-k} + 4C \frac{p-k-1}{(p-k)^2} \|\nabla v^{\frac{p-k}{2}} \|_2^2.
\end{align*}
Recalling that
\begin{equation*}
\frac{p(p-k-1)}{(p-k)^2}\ge \frac{1}{2}
\end{equation*}
for $p\ge 2+k$ and combining the above inequalities give
\begin{equation}
\frac{d}{dt} \|v\|_p^p + \lambda_2 \| v^{\frac{p-k}{2}} \|_{H^1}^2 \le p \gamma(v_*) \|v\|_p^p. \label{ami1}
\end{equation}

Next, by H\"older's and Sobolev's inequalities,
\begin{align*}
\|v\|_p^p & = \int_\Omega v^{\frac{p-k}{2}} v^{\frac{p+k}{2}} \mathrm{d} x \le \|v^{\frac{p-k}{2}}\|_{\frac{2N}{N-2}} \|v\|_{\frac{N(p+k)}{N+2}}^{\frac{p+k}{2}} \\
& \le C \|v^{\frac{p-k}{2}}\|_{H^1} \|v\|_{\frac{N(p+k)}{N+2}}^{\frac{p+k}{2}} .
\end{align*}
It then follows from Young's inequality that
\begin{equation*}
2p\gamma(v_*) \|v\|_p^p \le \lambda_2 \|v^{\frac{p-k}{2}}\|_{H^1}^2 + C p^2 \|v\|_{\frac{N(p+k)}{N+2}}^{p+k}. 
\end{equation*}
Combining \eqref{ami1} and the above estimate, we find
\begin{equation}
\frac{d}{dt} \|v\|_p^p + p\gamma(v_*) \|v\|_p^p \le 2p\gamma(v_*) \|v\|_p^p - \lambda_2 \| v^{\frac{p-k}{2}} \|_{H^1}^2 \le C p^2 \|v\|_{\frac{N(p+k)}{N+2}}^{p+k}. \label{ami2}
\end{equation}

Let us now define two sequences $(p_j)_{j\ge 0}$ and $(X_j)_{j\ge 0}$ by 
\begin{equation}
\begin{split}
p_{j+1} & = \frac{N+2}{N} p_j - k, \qquad j\ge 0, \qquad p_0 = \frac{kN}{2} + \frac{2(N-1)}{N-2}, \\ 
X_j & = \sup_{t\ge 0} \|v(t)\|_{p_j}^{p_j}, \qquad j\ge 0.
\end{split} \label{ami3}
\end{equation}
Note that $X_j<\infty$ for all $j\ge 0$ due to Lemma~\ref{lmvp} since the choice of $p_0$ guarantees that
\begin{equation}
p_{j+1} > p_j > p_0 > \frac{kN}{2}\,, \quad j\ge 0\,,  \;\;\text{ and }\;\; \lim_{j\to \infty} p_j = \infty. \label{ami4}
\end{equation} 
For $j\ge 0$, we take $p=p_{j+1}$ in \eqref{ami2} and use \eqref{ami3} to obtain
\begin{equation*}
\frac{d}{dt} \|v\|_{p_{j+1}}^{p_{j+1}} + \gamma(v_*) p_{j+1} \|v\|_{p_{j+1}}^{p_{j+1}} \le C p_{j+1}^2\|v\|_{p_j}^{\frac{(N+2) p_j}{N}} \le C p_{j+1}^2 X_j^{\frac{N+2}{N}}.
\end{equation*}
After an integration with respect to time, we find
\begin{align*}
\|v(t)\|_{p_{j+1}}^{p_{j+1}} & \le \|v^{in}\|_{p_{j+1}}^{p_{j+1}}  e^{-\gamma(v_*) p_{j+1} t} + C p_{j+1} X_j^{\frac{N+2}{N}} \left( 1 - e^{-\gamma(v_*) p_{j+1} t} \right) \\
& \le \max\left\{ |\Omega| \|v^{in}\|_\infty^{p_{j+1}} , C p_{j+1} X_j^{\frac{N+2}{N}} \right\}
\end{align*}
for all  $t\ge 0$, recalling that $v^{in} = (I-\Delta)^{-1} u^{in}$, see Lemma~\ref{tdub}. Hence,
\begin{equation*}
X_{j+1} \le C p_{j+1} \max\left\{ \|v^{in}\|_\infty^{p_{j+1}} , X_j^{\frac{N+2}{N}} \right\}, \qquad j\ge 0\,.
\end{equation*}
Owing to \eqref{ami4} and recalling that $X_{0}$ is finite by Lemma~\ref{lmvp},
we are in a position to apply Lemma~\ref{lmiter} with $\rho=\frac{N+2}{N}$, $b=1$, and $c=-k$ to conclude that the sequence $\left( X_j^{1/p_j} \right)_{j\ge 0}$ is bounded.  Since
\begin{equation*}
	\|v(t)\|_\infty = \lim_{j\to\infty} \|v(t)\|_{p_j}\,, \qquad t\ge 0\,, 
\end{equation*}
and $\|v(t)\|_{p_j} \le X_j^{1/p_j}$ for all $t\ge 0$ and $j\ge 0$, letting $j\to \infty$ completes the proof.
\end{proof}

\subsection{Improved regularity}\label{sec.ir}

With the time-independent upper bound on $v$ derived in Proposition~\ref{propunib}, we may argue in the same manner as in Section~\ref{gecs} to derive the uniform-in-time boundedness of $(u,v)$. More precisely, we regard $v$ as a solution to the following initial boundary value problem
\begin{subequations}\label{cpvu}
	\begin{align}
	\partial_t v - \nabla \cdot(\gamma(v)\nabla v) & = -f(v,\nabla v) +\varphi
	\,, \qquad (t,x)\in (0,\infty)\times\Omega\,, \label{cpvu1} \\
	\nabla v \cdot \mathbf{n} & = 0, \qquad (t,x)\in (0,\infty)\times\partial\Omega\,, \label{cpvu2} \\
	v(0)  & = v^{in}\,, \qquad x\in\Omega\,, \label{cpvu3}
	\end{align}
\end{subequations}
 the nonlinearity $f$ and the source term $\varphi$ being still defined by \eqref{rhsf} and \eqref{defvarphi}, respectively.
In view of  \eqref{e00} and Proposition~\ref{propunib},  we have
\begin{equation}
0 < v_* \le v(t,x) \le v^*\;\;\text{ in }\;\; [0,\infty)\times \bar{\Omega}\,, \label{bvu}
\end{equation}
so that, by the monotonicity of $\gamma$,
\begin{equation}
\gamma(v_*)\geq\gamma(v)\ge \gamma(v^*)>0 \;\;\text{ in }\;\; [0,\infty)\times \bar{\Omega}\,. \label{upvu}
\end{equation} 
A straightforward consequence of \eqref{rhsf}, \eqref{bvu}, and the regularity of $\gamma$ is that there is $C_3>0$ such that
\begin{equation}
|f(v,\nabla v)| \le C_3 \left( 1 + |\nabla v|^2 \right) \;\;\text{in }\;\; (0,\infty)\times\bar{\Omega}\,, \label{irb1u} 
\end{equation}
and 
\begin{equation}\label{phibu}
	0 \le \varphi=(I-\Delta)^{-1}[u\gamma(v)]\leq \gamma(v_*) v\leq v^* \gamma(v_*)\leq C_3\;\;\text{in }\;\; (0,\infty)\times\bar{\Omega}\,.
\end{equation}
 Thanks to these properties, we may then argue as in the proof of Proposition~\ref{prop.impreg2} to derive the H\"older continuity of $v$. We emphasize here that, owing to the uniform-in-time upper and lower bounds \eqref{bvu} on $v$, there is no time-dependence in the local energy estimates derived in Lemma~\ref{lem.impreg1}. Hence, using \cite[Chapter~II, Theorem~8.2]{LSU1968}, we derive a time-uniform version of Proposition~\ref{prop.impreg2}.

\begin{proposition}\label{prop.impreg2u}
There is a time-independent constant $\alpha\in (0,1)$ such that $v\in BUC^{\alpha}([0,\infty),C^\alpha(\bar{\Omega}))$.
\end{proposition}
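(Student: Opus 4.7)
The plan is to adapt the proof of Proposition~\ref{prop.impreg2} to the time-uniform setting, exploiting the fact that under the hypotheses of Theorem~\ref{TH2} all the $T$-dependent ingredients used there become time-independent thanks to Proposition~\ref{propunib}. First I would revisit the proof of Lemma~\ref{lem.impreg1}: inspection shows that the only quantities entering its constants $\gamma^*(T)$, $C_0(T)$, $C_1(T)$ are the upper bound $v^*(T)$ on $v$, the lower bound on $\gamma(v)$, the bound on $|f(v,\nabla v)|$, and the bound on $\varphi$. Under the hypotheses of Theorem~\ref{TH2}, these are respectively replaced by $v^*$ from Proposition~\ref{propunib}, by $\gamma(v^*)>0$ from \eqref{upvu}, and by $C_3$ from \eqref{irb1u}--\eqref{phibu}, none of which depends on $t$. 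Consequently the parameters $\delta(T)$ and $C_2(T)$ in the conclusion of Lemma~\ref{lem.impreg1} can be replaced by time-independent constants $\delta\in(0,1]$ and $C_2>0$, yielding a local energy inequality valid on $[t_0,t]\times\Omega$ for every $0\le t_0\le t<\infty$ with constants that do not depend on $t_0$.

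Next I would apply the regularity theory of \cite[Chapter~II]{LSU1968} on each cylinder $Q_n\triangleq[n,n+2]\times\bar{\Omega}$ for $n\in\mathbb{N}$, exactly as in the proof of Proposition~\ref{prop.impreg2}, but now with constants that are uniform in $n$. Specifically, the time-uniform version of Lemma~\ref{lem.impreg1} ensures that the shifted function $v_n(t,x)\triangleq v(t+n,x)$ belongs to the De~Giorgi-type class $\hat{\mathcal{B}}_2([0,2]\times\bar{\Omega},v^*,C,(2N+4)/N,\delta,2/N)$ with parameters independent of $n$. Applying \cite[Chapter~II, Remark~7.2, Lemma~8.1 and Theorem~8.2]{LSU1968} then yields some $\alpha\in(0,1)$ and $C>0$, both independent of $n$, such that
\begin{equation*}
\|v_n\|_{C^\alpha([1,2]\times\bar{\Omega})}\le C\,, \qquad n\ge 0\,.
\end{equation*}

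Patching these estimates for $n=0,1,2,\ldots$ gives a uniform H\"older bound on $v$ over $[1,\infty)\times\bar{\Omega}$, and the bound on $[0,1]\times\bar{\Omega}$ follows from Proposition~\ref{prop.impreg2} applied with $T=1$ (which produces a H\"older exponent that can be decreased to $\alpha$ without loss). A standard reformulation of joint H\"older continuity on $[0,\infty)\times\bar{\Omega}$ as continuity in time with values in $C^\alpha(\bar{\Omega})$ then gives $v\in BUC^\alpha([0,\infty),C^\alpha(\bar{\Omega}))$, as claimed.

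The only delicate point I expect is verifying that the boundary behaviour of $v$ (namely the treatment near $\partial\Omega$ and near the initial time) can be handled with constants independent of $n$; but this is already the case in Proposition~\ref{prop.impreg2}, since Lemma~\ref{lem.impreg1} allows test functions $\vartheta$ that do not vanish on $\partial\Omega$, the smoothness of $\partial\Omega$ is a time-independent datum, and the H\"older continuity of $v^{in}$ is only needed to initiate the argument on the cylinder $Q_0$.
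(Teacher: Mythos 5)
Your proposal is correct and takes essentially the same approach as the paper: the paper likewise observes that the uniform-in-time bounds from Proposition~\ref{propunib}, together with \eqref{upvu}--\eqref{phibu}, make all the constants in Lemma~\ref{lem.impreg1} time-independent, and then invokes the H\"older theory of \cite[Chapter~II, Theorem~8.2]{LSU1968} to obtain a time-uniform version of Proposition~\ref{prop.impreg2}. Your translated-cylinder patching argument and the handling of the strip $[0,1]\times\bar\Omega$ simply make explicit what the paper's short proof leaves implicit.
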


As in Section~\ref{sec.ir2}, we now exploit the H\"older regularity on $v$ provided by Proposition~\ref{prop.impreg2u} and, as in the proofs of Lemma~\ref{lem.impreg3} and Proposition~\ref{prop.impreg4}, we proceed along the lines of \cite[Section~6]{Aman1989} to establish the boundedness of the trajectory $\{v(t)\ :\ t\ge 0\}$ in $W^{2,p}(\Omega)$ for all $p\in (N,\infty)$. We here take advantage of the validity of the needed bounds with constants which do not depend on time to derive estimates which also do not depend on time.

\begin{proposition}\label{prop.impreg4u}
	For any  $p\in (N,\infty)$,  there is $C_4(p)>0$ such that
	\begin{equation*}
		\|v(t)\|_{W^{2,p}} \le C_4(p)\,, \qquad t\ge 0\,. 
	\end{equation*}
\end{proposition}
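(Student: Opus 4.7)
The plan is to adapt the arguments of Lemma~\ref{lem.impreg3} and Proposition~\ref{prop.impreg4} to the situation where all bounds on $v$, $\gamma(v)$, and the H\"older norm of $v$ are now time-independent, courtesy of Proposition~\ref{propunib} and Proposition~\ref{prop.impreg2u}. As in Section~\ref{sec.ir2}, I would introduce $\tilde{\mathcal{A}}(t)z = -\gamma(v(t))\Delta z$ on the domain $W^{2,p}_{\mathcal{B}}(\Omega)$. The key change is that, thanks to \eqref{bvu}--\eqref{upvu}, the family $\{\tilde{\mathcal{A}}(t)\}_{t\ge 0}$ is now uniformly normally elliptic on $[0,\infty)$, while Proposition~\ref{prop.impreg2u} guarantees uniform H\"older continuity of $t\mapsto \tilde{\mathcal{A}}(t)$ with values in $\mathcal{L}(W^{2,p}(\Omega),L^p(\Omega))$. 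By Amann's theory~\cite{Aman1990,Aman1995}, the associated parabolic evolution operator $\tilde{U}(t,\tau)$ then satisfies analogues of \eqref{irb2}, \eqref{irb3}, and \eqref{irb12} with constants $M$, $M_\theta$, $L_\xi$, and $\omega$ that no longer depend on $T$.

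To eliminate the factor $e^{\omega T}$ which appeared in the proof of Lemma~\ref{lem.impreg3} and Proposition~\ref{prop.impreg4}, I would exploit the representation formula \eqref{irb9} over a \emph{sliding} time window of unit length: for $t\ge 1$,
\begin{equation*}
v(t) = \tilde{U}(t,t-1) v(t-1) + \int_{t-1}^t \tilde{U}(t,\tau) F(\tau)\, \mathrm{d}\tau,
\end{equation*}
so that only time increments bounded by one enter the evolution operator bounds. Combined with the uniform $L^\infty$-bound on $F=\varphi - v\gamma(v)$ provided by \eqref{bvu} and \eqref{phibu}, this produces a time-uniform analogue of Lemma~\ref{lem.impreg3}, namely
\begin{equation*}
\sup_{t\ge 0}\|v(t)\|_{W^{2\theta,p}} \le C(p,\theta)
\end{equation*}
for $\theta\in ((1+p)/(2p),1)$, where the bound on $v(t-1)$ inside the sliding interval is itself controlled inductively by the previous step and the finite interval $[0,1]$ is handled exactly as in Section~\ref{sec.ir2}. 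Choosing $\theta$ close enough to $1$ yields a uniform bound on $\|\nabla v(t)\|_\infty$ by Sobolev embedding.

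For the final upgrade to $W^{2,p}$, I would reuse the decomposition
\begin{equation*}
\varphi = \Gamma(v) + (I-\Delta)^{-1}\bigl[f(v,\nabla v)-\Gamma(v)\bigr]
\end{equation*}
from the proof of Proposition~\ref{prop.impreg4}, so that the smoothing of $(I-\Delta)^{-1}$, the uniform bounds on $v$ and $\nabla v$, and the Brezis--Mironescu composition estimate together control $F$ in $W^{2\xi,p}(\Omega)$ uniformly in $t$ for some $\xi \in (0,(p+1)/(2p))$. Inserting this into the sliding-window representation and using the uniform-in-time version of \eqref{irb12} then yields the desired bound. The main obstacle is to verify that Amann's parabolic evolution operator estimates can indeed be taken uniform on $[0,\infty)$; this hinges on the fact that Proposition~\ref{prop.impreg2u} provides H\"older continuity in the class $BUC^\alpha$, i.e.\ with a finite \emph{global} H\"older seminorm on $[0,\infty)$, which is exactly the regularity demanded by \cite[Section~II.4]{Aman1995} to produce time-independent constants $M_{\theta}$, $L_{\xi}$, and $\omega$.
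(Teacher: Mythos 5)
Your overall framework matches the paper's: same operator $\tilde{\mathcal{A}}(t)z=-\gamma(v(t))\Delta z$, same appeal to Amann's theory with time-independent constants courtesy of \eqref{bvu}--\eqref{upvu} and Proposition~\ref{prop.impreg2u}, and the same decomposition $\varphi=\Gamma(v)+(I-\Delta)^{-1}[f(v,\nabla v)-\Gamma(v)]$ combined with Brezis--Mironescu to control $F$ in $W^{2\xi,p}$. The genuine difference is the mechanism for killing the $e^{\omega T}$ growth: the paper picks $\mu>\omega$, rewrites the problem as $\partial_t v+(\mu+\tilde{\mathcal{A}})v=\tilde F$ with $\tilde F=F+\mu v$, and uses the representation $v(t)=e^{-\mu t}\tilde U(t,0)v^{in}+\int_0^t e^{-\mu(t-\tau)}\tilde U(t,\tau)\tilde F(\tau)\,\mathrm{d}\tau$, so that all exponentials decay; you instead use a unit sliding window. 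Both can work, but there is a gap in how you treat the term $\tilde U(t,t-1)v(t-1)$.

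As written, your claim that ``the bound on $v(t-1)$ \ldots is itself controlled inductively by the previous step'' does not close. If you iterate the $W^{2\theta,p}$ bound over consecutive unit intervals, you get a recursion of the form $a_{n+1}\le M_\theta e^{\omega}\,a_n+C$; since in general $M_\theta e^{\omega}>1$, this yields exponential growth, not a uniform bound. The correct way to close the sliding-window argument is \emph{not} an induction in $W^{2\theta,p}$: use the smoothing estimate \eqref{irb3u} with $t-\tau=1$, namely
\begin{equation*}
\bigl\|\tilde U(t,t-1)v(t-1)\bigr\|_{W^{2\theta,p}}\le M_\theta\,e^{\omega}\,\|v(t-1)\|_{p},
\end{equation*}
and invoke the time-independent $L^\infty$ (hence $L^p$) bound on $v$ from Proposition~\ref{propunib}. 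This replaces the unbounded $W^{2\theta,p}$-norm on the right-hand side by a quantity already known to be uniformly bounded, and the iteration becomes unnecessary. The same remark applies at the $W^{2,p}$ stage: there one should bound $\tilde U(t,t-1)v(t-1)$ in $W^{2,p}$ via the $L^p\to W^{2,p}$ estimate in \eqref{irb2u} at unit time lag, again using $\|v(t-1)\|_p\le |\Omega|^{1/p}v^*$. Once that point is made precise, your sliding-window route is a valid alternative to the paper's $\mu$-shift; the latter is a bit cleaner because it requires no split between $[0,1]$ and $[1,\infty)$.
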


\begin{proof} We first consider $\theta\in \left( \frac{1+p}{2p},1 \right)$. With the same notations as in Lemma~\ref{lem.impreg3}, we now have a unique parabolic fundamental solution $\tilde{U}$ associated to $\{ \tilde{\mathcal{A}}(t)\ :\ t\ge 0\}$ and there exist time-independent positive constants $M$, $M_\theta$, and $\omega$ such that 
	\begin{equation}
	\|\tilde{U}(t,\tau)\|_{\mathcal{L}(W^{2,p}(\Omega))} + 	\|\tilde{U}(t,\tau)\|_{\mathcal{L}(L^p(\Omega))} + (t-\tau) 	\|\tilde{U}(t,\tau)\|_{\mathcal{L}(L^p(\Omega),W^{2,p}(\Omega))} \le M e^{\omega(t-\tau)} \label{irb2u}
	\end{equation}
and
	\begin{equation}
	\|\tilde{U}(t,\tau)\|_{\mathcal{L}(W_{\mathcal{B}}^{2\theta,p}(\Omega))} + (t-\tau)^\theta \|\tilde{U}(t,\tau)\|_{\mathcal{L}(L^p(\Omega),W_{\mathcal{B}}^{2\theta,p}(\Omega))} \le M_\theta e^{\omega(t-\tau)} \label{irb3u}
	\end{equation}
	for $0\le \tau<t$. 
	
	We then pick $\mu>\omega$ and deduce from \eqref{cpvu} that $v$ solves 
	\begin{equation}
	\begin{split}
	\partial_t v + \left( \mu + \tilde{\mathcal{A}}(\cdot) \right)v  & = \tilde{F}\,, \qquad t>0\,, \\
	v(0) & = v^{in} \,, 
	\end{split}\label{irb4u}
	\end{equation}
	where
	\begin{equation*}
	\tilde{F} = F + \mu v = \mu v +\varphi - v\gamma(v) \,. 
	\end{equation*}
By the boundedness \eqref{bvu} of $v$ and \eqref{phibu}, there is $C_5>0$ such that
\begin{equation}
	\|\tilde{F}(t)\|_\infty\leq C_5\,, \qquad t\ge 0\,.
\end{equation}
 Owing the continuity of $u$ and $v$ provided by Theorem~\ref{local}, 
\begin{equation*}
	\tilde{F} \in C([0,\infty) \times\bar{\Omega})\,, 
\end{equation*}
and $v\in C([0,\infty),L^p(\Omega))\cap C^1((0,\infty),L^p(\Omega))$ is such that $v(t)\in \mathrm{dom}(\tilde{\mathcal{A}}(t))$ for all $t>0$. Consequently, $v$ is a solution to the linear initial-value problem~\eqref{irb4u} in the sense of \cite[Section~II.1.2]{Aman1995}. Using again \cite[Remarks~II.2.1.2~(a)]{Aman1995}, we conclude that $v$ has the representation formula 
	\begin{equation}
	v(t) = e^{-\mu t} \tilde{U}(t,0) v^{in} + \int_0^t e^{-\mu(t-\tau)} \tilde{U}(t,\tau) \tilde{F}(\tau)\ \mathrm{d}\tau\,, \qquad t\ge 0\,. \label{irb9u}
	\end{equation}
	We then infer from \eqref{irb3u} and \eqref{irb9u} that, for $t\ge 0$,
	\begin{align}
	\|v(t)\|_{W^{2\theta,p}} & \le M_\theta e^{(\omega-\mu)t} \|v^{in}\|_{W^{2\theta,p}} +M_\theta \int_0^t (t-\tau)^{-\theta} e^{(\omega-\mu)(t-\tau)} \|F(\tau)\|_p\ \mathrm{d}\tau  \nonumber \\
	& \le C(p,\theta) +  C_5 M_\theta |\Omega|^{1/p} \int_0^t (t-\tau)^{-\theta} e^{(\omega-\mu)(t-\tau)} \mathrm{d}\tau \nonumber \\
	&\leq C(p,\theta)\,,  \label{irb10u}
	\end{align}
since
	\begin{equation*}
	\mathcal{I}_\theta \triangleq \int_0^\infty \tau^{-\theta} e^{(\omega-\mu)\tau}\ \mathrm{d}\tau < \infty\,.
	\end{equation*}

 We next proceed as in the proof of Proposition~\ref{prop.impreg4} to extend the above estimate to $\theta=1$. To this end, given $\xi\in (0,(p+1)/2p)$, we observe that Proposition~\ref{prop.impreg2u} and \cite[Lemma~II.5.1.3]{Aman1995} imply that the bound \eqref{irb12} is valid for all $0\le \tau < t$, with a constant $L_\xi$ which depends, neither on $t$, nor on $\tau$. Then, thanks to \eqref{irb9u},
\begin{equation}
\|v(t)\|_{W^{2,p}} \le M e^{(\omega-\mu) t} \|v^{in}\|_{W^{2,p}} + L_\xi \int_0^t (t-\tau)^{\xi-1} e^{(\omega-\mu)(t-\tau)} \|\tilde{F}(\tau)\|_{W^{2\xi,p}}\ \mathrm{d}\tau\,, \label{irb13u}
\end{equation}
recalling that $\tilde{F}=F+\mu v$.  We then complete the proof  of Proposition~\ref{prop.impreg4u} with the help of \eqref{irb10u} and the corresponding time-independent version of estimates \eqref{irb15} and \eqref{irb16}, bearing in mind that $\mu>\omega$ and $\xi-1>-1$.
\end{proof}

 Theorem~\ref{TH2} is now a straightforward consequence of Proposition~\ref{prop.impreg4u} and a bootstrap argument.

\begin{proof}[Proof of Theorem~\ref{TH2}] 
With the aid of Proposition~\ref{prop.impreg4u}, we may further use a standard bootstrap argument (cf. \cite[Lemma~4.3]{AhnYoon2019}) to prove that
\begin{equation*}
\sup\limits_{t\ge 0}\left\{ \|u(t)\|_{\infty}+\|v(t)\|_{W^{1,\infty}} \right\}\leq C\,,
\end{equation*}
 and thus complete the proof.
\end{proof}

\section*{Acknowledgments}
J.~Jiang  is supported by Hubei Provincial Natural Science Foundation under the grant No. 2020CFB602. Ph.~Lauren\c{c}ot thanks Christoph Walker for illuminating discussions on the theory developed in \cite{Aman1995}. We also thank the referee for helpful remarks.

\bibliographystyle{siam}
\bibliography{Reference}

\end{document}